\def\newthm#1#2{\newtheorem{#1}[dummy]{#2}%
  \expandafter\def\csname#2\endcsname##1{\hyperref[#1:##1]{#2~\ref*{#1:##1}}}}
\theoremstyle{definition}
\def\namedlabel#1#2{\begingroup
    #2%
    \def\@currentlabel{#2}%
    \phantomsection\label{#1}\endgroup
}
\newcommand{\Section}[1]{\hyperref[sec:#1]{Section~\ref*{sec:#1}}}
\newcommand{\Table}[1]{\hyperref[tab:#1]{Table~\ref*{tab:#1}}}
\newcommand{\eqn}[1]{\hyperref[eqn:#1]{(\ref*{eqn:#1})}}
\newcommand{\Figure}[1]{\hyperref[fig:#1]{Figure~\ref*{fig:#1}}}
\DeclareMathOperator{\Gr}{Gr}
\DeclareMathOperator{\Span}{Span}
\DeclareMathOperator{\Sing}{Sing}
\DeclareMathOperator{\codim}{codim}
\DeclareMathOperator{\ch}{\ch}
\newcommand{\C}{{\mathbb C}}
\newcommand{\id}{\text{id}}
\newcommand{\ignore}[1]{}
\begin{document}

\title
[The Isomorphism Problem for Grassmannian Schubert Varieties]
{The Isomorphism Problem for Grassmannian Schubert Varieties}

\date{\today}

\author{Mihail Țarigradschi}
\address{Department of Mathematics, Rutgers University, 110
    Frelinghuysen Road, Piscataway, NJ 08854, USA}
\email{mt994@math.rutgers.edu}

\author{Weihong Xu}
\address{Department of Mathematics (0123), Virginia Tech, 460 McBryde Hall,
    225 Stanger Street,
    Blacksburg, VA 24061-1026}
\email{weihong@vt.edu}



\begin{abstract}
    We prove that Schubert varieties in potentially different Grassmannians are isomorphic as varieties if and only if their corresponding Young diagrams are identical up to a transposition. We also discuss a generalization of this result to Grassmannian Richardson varieties. In particular, we prove that Richardson varieties in potentially different Grassmannians are isomorphic as varieties if their corresponding skew diagrams are semi-isomorphic as posets, and we conjecture the converse. Here, two posets are said to be semi-isomorphic if there is a bijection between their sets of connected components such that the corresponding components are either isomorphic or opposite.
\end{abstract}

\maketitle

\section{Introduction}

The Grassmannian \(\Gr(m,n)\) of \(m\) dimensional subspaces of \(\C^n\) and Schubert varieties (natural subvarieties of the Grassmannian) are central and classical objects of study in geometry, with connections to representation theory and algebraic combinatorics. Schubert varieties of \(\Gr(m,n)\) are indexed by Young diagrams contained in a rectangle with \(m\) rows and \(n-m\) columns of boxes.

In this note, we prove the following.

\begin{thm}[\(\doteq \Theorem{main}\)]\label{thm:schub}
    Schubert varieties in potentially different Grassmannians are isomorphic as varieties if and only if their corresponding Young diagrams are identical up to a transposition.
\end{thm}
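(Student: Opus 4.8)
The plan is to prove both implications by translating between the variety and its Young diagram through two elementary operations: the duality $\Gr(m,n)\cong\Gr(n-m,n)$ and the deletion of empty rows and columns. For the ``if'' direction I would first record that the abstract isomorphism type of a Schubert variety $X_\lambda$ depends only on the partition $\lambda$, not on the ambient Grassmannian: if the last row (resp.\ last column) of $\lambda$ is empty, then projecting away the corresponding flag step identifies $X_\lambda\subseteq\Gr(m,n)$ with the Schubert variety of the same diagram inside $\Gr(m-1,n-1)$, so one may always pass to the bounding box of $\lambda$. The self-duality $V\mapsto V^\perp$ then supplies an isomorphism $\Gr(m,n)\cong\Gr(n-m,n)$ carrying $X_\lambda$ to $X_{\lambda^T}$, whence $X_\lambda\cong X_{\lambda^T}$. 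Combining the two, diagrams agreeing up to transposition yield isomorphic varieties.

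The ``only if'' direction is the heart of the matter: given an abstract isomorphism $X_\lambda\cong X_\mu$, I must recover $\lambda$ up to transposition from intrinsic data. The first invariant is dimension, giving $|\lambda|=|\mu|$. The base case is the smooth one: $X_\lambda$ is smooth precisely when $\lambda$ is a rectangle, in which case $X_\lambda$ is itself isomorphic to a Grassmannian, and the statement reduces to the classical isomorphism theorem that $\Gr(k,N)\cong\Gr(k',N')$ exactly when $(k',N')\in\{(k,N),(N-k,N)\}$ --- which is precisely the transpose ambiguity.

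For the singular case I would argue by induction on $\dim X_\lambda$ using the singular locus, which for a Grassmannian Schubert variety is a union of Schubert varieties indexed by the inner corners of $\lambda$ (the concave turns of its boundary staircase). Since $\Sing$ and its decomposition into irreducible components are preserved by any abstract isomorphism, an isomorphism $X_\lambda\cong X_\mu$ matches the components $X_{\lambda^{(i)}}$ of $\Sing X_\lambda$ with the components $X_{\mu^{(j)}}$ of $\Sing X_\mu$; by the inductive hypothesis each matched pair agrees up to transposition. It then remains to reconstruct $\lambda$ from this corner data --- the chopped profiles $\lambda^{(i)}$ pin down the staircase of $\lambda$, hence $\lambda$ itself --- so that $\lambda$ and $\mu$ coincide up to transposition.

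I expect the main obstacle to be the coherence of the transposition. The inductive step only returns each singular component up to an \emph{independent} transpose, whereas the duality is a single \emph{global} choice; the crux is to promote these local ambiguities to one simultaneous transposition, ruling out the possibility of transposing some corners while fixing others. A secondary technical point is to verify that the Schubert-theoretic data invoked (the decomposition of the singular locus, and the rectangular base case) is genuinely intrinsic, since an abstract isomorphism need not respect the torus action \emph{a priori}; here the purely numerical anchors --- the dimensions of the components and the rank of the divisor class group of $X_\lambda$, which equals the number of corners of $\lambda$ and is transpose-symmetric --- should provide the needed intrinsic footholds.
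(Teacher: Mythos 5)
Your overall strategy coincides with the paper's: the ``if'' direction via the duality $\Gr(m,n)\cong\Gr(n-m,n)$ together with a reduction showing the ambient Grassmannian is irrelevant, and the ``only if'' direction by induction on dimension using the Lakshmibai--Weyman description of $\Sing(X_\lambda)$, with the rectangle case as the smooth base case. However, the two difficulties you flag at the end --- identifying \emph{which} singular component matches which, and promoting the independent per-component transpose ambiguities to a single global transposition --- are not side issues to be patched later; they \emph{are} the proof, and your proposal does not contain the arguments that resolve them. The paper spends essentially all of its effort there: it isolates the intrinsic invariants $r$ (number of components of $\Sing$), $r_0$ (number of rectangles of the partition $\lambda^0$ indexing the intersection of all components), the ranks of the Chow groups $A_i(X_\lambda)$ (equivalently, the number of size-$i$ subdiagrams of $\lambda$), and the longest-hook statistic $\xi$; it uses the proper-intersection pattern among components to pin down the two ``extreme'' components $X_{\lambda^1}$, $X_{\lambda^{r-1}}$; it rules out the cross-matching $X_{\lambda^1}\cong X'_{\mu^{r-1}}$ with $\xi$; and it kills the incoherent case $(\lambda^1)^T=\mu^1$, $(\lambda^{r-1})^T=\mu^{r-1}$ by a delicate computation forcing $a_r=2$ and $b_2'=1$ simultaneously, a contradiction. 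None of this is replaceable by the soft invariants you name (component dimensions, divisor class rank).

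Moreover, your key reconstruction claim --- that ``the chopped profiles $\lambda^{(i)}$ pin down the staircase of $\lambda$, hence $\lambda$ itself'' --- is false as stated. Take $\lambda=(a_1,a_2^{b_2})$ with $a_1>a_2$ (two rectangles): the singular locus has a single component, indexed by $\lambda^1=((a_2-1)^{b_2+1})$, which retains no information whatsoever about $a_1$. The paper recovers $a_1$ only by combining $|\lambda|=|\mu|$ with a separate counting argument (the single-row diagram $(a'+1)$ is a subdiagram of one but not the other, contradicting the equality of Chow group ranks), and needs a further such argument when the inductive hypothesis returns the component only up to transpose. So your induction, as written, cannot close: after invoking the inductive hypothesis on the matched components you have strictly less information than is needed to conclude, and the missing input is exactly the case analysis (the paper's Cases 1--3, according to $r_0\in\{r,r-1,r-2\}$) that your proposal defers. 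A minor, fixable difference: your base case invokes the classical isomorphism theorem for Grassmannians, whereas the paper proves the rectangle case self-containedly by the same subdiagram count; either is acceptable, but the count is needed elsewhere anyway.
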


The isomorphism problem for Schubert varieties in complete flag varieties of Kac-Moody type has been studied by Richmond and Slofstra \cite{richmond2021isomorphism}. Develin, Martin, and Reiner classified a class of smooth Schubert varieties in type \(A\) partial flag varieties \cite{develin_martin_reiner_2007}.

This problem can be extended to Richardson varieties. A Richardson variety is the intersection of a Schubert variety and an opposite Schubert variety in a flag variety. In the Grassmannian case, the two Schubert varieties correspond to two Young diagrams. When the first diagram contains the latter, the difference of these diagrams is called a skew diagram. Non-empty Grassmannian Richardson varieties are indexed by skew diagrams.

To a skew diagram we assign a poset structure on the set of its boxes, see \Section{rich}.

For Grassmannian Richardson varieties, we prove the following statement and conjecture its converse.

\begin{prop}[\(\doteq \Proposition{if}\)]\label{prop:rich}
    If two skew diagrams are semi-isomorphic as posets, then the corresponding Richardson varieties in potentially different Grassmannians are isomorphic as varieties.
\end{prop}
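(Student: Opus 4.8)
The plan is to reduce the statement to a comparison of connected components and then to realize each comparison by one of two explicit automorphisms of Grassmannians. Write $D$ and $D'$ for the two skew diagrams, and let $D = D_1 \sqcup \cdots \sqcup D_k$ and $D' = D'_1 \sqcup \cdots \sqcup D'_k$ be their decompositions into connected components; these coincide with the connected components of the posets, since a cover relation in a skew diagram is exactly an edge-adjacency. The first step is a \emph{product decomposition}: I would show that the Richardson variety of any skew diagram factors as $X_D \cong X_{D_1} \times \cdots \times X_{D_k}$. In a skew shape the components are stacked along the antidiagonal, so each disconnection occurs at a row index $i$ with $\lambda_{i+1} \le \mu_i$, which forces every box of rows $\le i$ to lie strictly to the right of every box of rows $\ge i+1$. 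At such a pinch the defining rank conditions for $X_\lambda \cap X^\mu$ degenerate into equalities that force every $V$ in the Richardson variety to split as $(V \cap A) \oplus (V \cap B)$ for a fixed coordinate decomposition $\C^n = A \oplus B$ adapted to both the reference flag and its opposite; this identifies $X_D$ with the product of the Richardson varieties of the two blocks, and induction yields the full factorization. Along the way I would record that $X_D$ depends only on the skew diagram $D$, not on the ambient rectangle into which it is embedded.

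The second step supplies the geometric isomorphisms used to compare individual components. The duality $\Gr(m,n) \xrightarrow{\sim} \Gr(n-m,n)$, $V \mapsto V^{\perp}$ — the same tool behind \Theorem{schub} — carries $X_D$ to the Richardson variety of the transpose $D^t$, and transposition is an isomorphism of posets; the involution of $\Gr(m,n)$ reversing the order of coordinates carries $X_D$ to the Richardson variety of the $180^\circ$ rotation $D^{\mathrm{rot}}$ of $D$ inside its rectangle, and this rotation induces the \emph{opposite} poset. Thus transposition realizes poset isomorphisms and rotation realizes poset anti-isomorphisms at the level of varieties, and these four symmetries (identity, transpose, rotation, and their composite) are precisely the ones that preserve the skew-shape structure and match the two alternatives in the definition of semi-isomorphism. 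What remains is the combinatorial bridge: a rigidity lemma asserting that a connected skew diagram is determined by its poset up to transposition. The point is that the poset is graded by $i+j$ and every element covers, and is covered by, at most two others, so the planar embedding is forced up to exchanging the two cover directions — exactly the transpose ambiguity — while the partition constraints on $\lambda$ and $\mu$ exclude the bent ribbons that would otherwise share, say, a chain poset.

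Granting these ingredients, the conclusion follows. Fix the component bijection $\sigma$ coming from the semi-isomorphism; for each $i$ the posets of $D_i$ and $D'_{\sigma(i)}$ are either isomorphic or opposite. In the isomorphic case the rigidity lemma forces $D'_{\sigma(i)}$ to equal $D_i$ or $D_i^t$, and the transpose duality gives $X_{D_i} \cong X_{D'_{\sigma(i)}}$; in the opposite case the poset of $D'_{\sigma(i)}$ is isomorphic to that of $D_i^{\mathrm{rot}}$, so the same lemma relates $D'_{\sigma(i)}$ to $D_i^{\mathrm{rot}}$ by a transposition, and the rotation isomorphism (composed with duality if needed) again yields $X_{D_i} \cong X_{D'_{\sigma(i)}}$. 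Taking the product of these isomorphisms over $i$ and invoking the product decomposition of the first step gives $X_D \cong X_{D'}$. I expect the main obstacle to be the rigidity lemma together with its interface to geometry: showing that an \emph{abstract} poset isomorphism between connected skew diagrams is always induced by an honest transposition of shapes — so that it can be promoted to the geometric duality — is where the partition combinatorics must be controlled carefully, whereas the product decomposition and the two symmetry isomorphisms are essentially routine once the rank-condition description of Richardson varieties is in hand.
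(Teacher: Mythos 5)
Your proposal follows essentially the same route as the paper: reduce to connected components via the product decomposition, realize poset isomorphisms by the transpose duality \(\Gr(m,n)\cong\Gr(n-m,n)\) and poset anti-isomorphisms by the coordinate-reversing involution \(w_0\) (your \(180^\circ\) rotation is exactly the paper's \(\theta\mapsto\theta^\dagger\)), and rest everything on the rigidity statement that a connected skew diagram is determined by its poset up to transposition. The one ingredient you leave as a sketch --- that rigidity statement --- is precisely the paper's key lemma, proved there by induction on the number of boxes (removing a minimal box and showing its re-attachment is forced, with the skew-shape constraint ruling out the ``bent ribbon'' ambiguity you mention), so the ``main obstacle'' you flag is indeed where the paper invests its work.
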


Here, two posets are said to be semi-isomorphic if there is a bijection between their sets of connected components such that the corresponding components are either isomorphic or opposite. When specialized to the case of Schubert varieties, \Proposition{rich} says that if two Young diagrams are identical up to a transpose, then the corresponding Schubert varieties in potentially different Grassmannians are isomorphic as varieties.
This is the easier direction of \Theorem{schub}.

To prove the other direction of \Theorem{schub}, we use the combinatorial description of the singular locus of a Schubert variety given by Lakshmibai and Weyman \cite{singloci}, and the well-known fact that classes of the Schubert subvarieties contained in a Schubert variety form a basis for the Chow ring of that Schubert variety. These tools allow us to attack the problem combinatorially.

In \Section{prelim} we set up notations and state the preliminaries; we prove \Theorem{schub} in \Section{schub}; \Section{rich} is devoted to discussing the isomorphism problem for Grassmannian Richardson varieties.

\subsection*{Acknowledgements} We thank Anders Buch for helpful comments on our exposition.

\section{Notation and Preliminaries}\label{sec:prelim}

Let \(X = \Gr(m,n)\) be the Grassmannian of \(m\) dimensional subspaces of \(\C^n\). Schubert varieties in \(X\) are indexed by partitions \( \lambda = (\lambda_1, \dots, \lambda_m) \) such that \(k := n-m \geq \lambda_1 \geq \dots \geq \lambda_m \geq 0\) is a decreasing sequence of integers. We will identify such a partition with the corresponding Young diagram, whose \(i\)-th row from the top contains \(\lambda_i\) boxes.
We call a Young diagram \(\lambda' = (\lambda'_1, \dots, \lambda'_m)\) a subdiagram of \(\lambda\) if \( \lambda'_i \leq \lambda_i\) for \(i=1, \dots, m\) (see \Figure{subdiagram} for an example). Given two Young diagrams \(\lambda, \mu\) we define their intersection \(\lambda \cap \mu\) as the Young diagram consisting of boxes in both \(\lambda\) and \(\mu\).
For brevity, we will refer to Young diagrams simply as diagrams.

\begin{figure}[H]
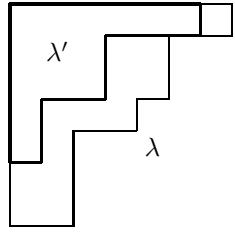

    \[ \tableau{12}{[TL]& [T]& [T]& [T]& [H]& [HR]& [hr]\\ [L]& []\lambda'& []& [LT]& [r]\\[L]& []& [R]& []& [br]\\ [LR]& [T]& [Tb]& [br]\\ [LR]& [r] &[]&[]&[]\lambda \\ [Tl]& [r]\\ [bl]& [br]}\]
    \caption{The diagram \(\lambda'\) is a subdiagram of \(\lambda\).}
    \label{fig:subdiagram}
\end{figure}
We will also write \(\lambda\) as \(({a_1}^{b_1}, \dots, {a_r}^{b_r})\), where \( a_1\geq \dots \geq a_r \geq 0 \) and \( b_i \geq 0 \) for \(i=1,\dots,r\). Here \(a^b\) means \(a\) repeated \(b\) times, corresponding to a rectangle with \(b\) rows and \(a\) columns of boxes. Whenever \(b=0\) or \(a=0\), the term can be left out of the expression.
Note that such an expression for \(\lambda\) is not unique. However, if we impose the extra conditions that \(a_1 > \dots > a_r > 0\) and \(b_i>0\) for all \(i = 1, \dots, r\), then the expression is unique, in which case we say that \(\lambda\) consists of \(r\) rectangles.

Let \(e_1, \dots , e_n\) be the standard basis of \(\C^n\). Given a partition \( \lambda = (\lambda_1, \dots, \lambda_m) \) that satisfies \( k \geq \lambda_1 \geq \dots \geq \lambda_m \geq 0 \), the corresponding Schubert variety is
\[
    X_\lambda = \{ \Sigma\in X: \dim(\Sigma \cap \Span\{ e_1, \dots, e_{\lambda_i + m-i+1} \}) \geq m-i+1\ \text{for } i=1, \dots, m \},
\]
which has complex dimension \[ \dim(X_\lambda) = | \lambda | \coloneqq \lambda_1+\dots+\lambda_m;\] and the corresponding opposite Schubert variety is
\[
    X^\lambda = \{ \Sigma\in X: \dim(\Sigma \cap \Span\{ e_n, e_{n-1}, \dots, e_{\lambda_i + m-i+1} \}) \geq i\ \text{for } i=1, \dots, m \},
\]
which has complex codimension \( \codim(X^\lambda) = | \lambda | \) in \(X\).

We shall repeatedly use the following well-known lemma.

\begin{lemma}\label{lemma:transpose}
    There is an isomorphism between \(X=\Gr(m,n)\) and \(Y=\Gr(n-m,n)\), inducing isomorphisms between \(X_\lambda\) and \(Y_{\lambda^T}\) and between \(X^\lambda\) and \(Y^{\lambda^T}\).
\end{lemma}
\begin{proof}
    Note that \(\Sigma\) is an \(m\)-dimensional subspace of \(\C^n\) if and only if \((\C^n/\Sigma)^*\) is an \((n-m)\)-dimensional subspace of \((\C^n)^*\). The isomorphism is given by sending \(\Sigma\) to \((\C^n/\Sigma)^*\).
\end{proof}

We let \[\lambda^\vee\coloneqq(k-\lambda_m,\dots,k-\lambda_1)\] denote the Poincar\'e dual partition of \(\lambda\).

The automorphism \( w_0 : \C^n \to \C^n \) given by \( w_0(e_i) = e_{n+1-i} \) induces an automorphism \(w_0\) of \(X\), furthermore it can be seen that \( w_0(X^\lambda) = X_{\lambda^\vee} \).

The irreducible components of the singular locus of \(X_\lambda\), \(\Sing(X_\lambda)\), are then characterized by \cite[Theorem 5.3]{singloci}, which we restate as \Theorem{sing} and illustrate in \Figure{sing}. In the following, let \(\lambda = ({a_1}^{b_1}, \dots, {a_r}^{b_r})\), where \( a_1 > \dots > a_r > 0 \) and \( b_i > 0 \) for \(i=1,\dots,r\).

\begin{thm}\label{thm:sing}
    \(\Sing(X_\lambda)\) has \(r-1\) irreducible components \(X_{\lambda^1},\dots,X_{\lambda^{r-1}}\), where \[\lambda^i=({a_1}^{b_1},\dots,{a_{i-1}}^{b_{i-1}},{a_i}^{b_i-1},(a_{i+1}-1)^{a_{i+1}+1},{a_{i+2}}^{b_{i+2}},\dots,{a_r}^{b_r})\text{ for }i=1,\dots,r-1.\]
\end{thm}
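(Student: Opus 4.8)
Since $X_\lambda$ is stable under the Borel subgroup $B$ of upper-triangular matrices (the defining incidence conditions only involve the $B$-stable subspaces $\Span\{e_1,\dots,e_j\}$) and smoothness is an intrinsic open condition, the singular locus $\Sing(X_\lambda)$ is a closed $B$-stable subset, hence a union of Schubert varieties $X_\mu$ with $\mu\subseteq\lambda$. Its irreducible components are therefore the $X_\mu$ for the maximal partitions $\mu\subsetneq\lambda$ whose torus-fixed point $e_\mu$ (the coordinate subspace corresponding to $\mu$) is a singular point of $X_\lambda$. So the plan is to first reduce the problem to deciding, for each $\mu\subseteq\lambda$, whether $e_\mu\in\Sing(X_\lambda)$, and then to locate the maximal such $\mu$.

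For the pointwise criterion I would use that $X=\Gr(m,n)$ is minuscule, so that at $e_\mu$ the Zariski tangent space $T_{e_\mu}X_\lambda$ is spanned by the tangent directions of the $T$-stable curves through $e_\mu$ contained in $X_\lambda$, and $\dim T_{e_\mu}X_\lambda$ equals the number of such curves. Here $T=(\C^*)^n$ is the standard torus, and these curves correspond to the partitions $\nu$ whose index set differs from that of $\mu$ in a single element, namely the $\nu\subsetneq\mu$ together with those $\nu\supsetneq\mu$ for which $\nu/\mu$ is a single border strip; such a curve lies in $X_\lambda$ exactly when its larger endpoint is contained in $\lambda$. The downward curves are all contained in $X_\mu$ and contribute exactly $\dim X_\mu=|\mu|$, since $e_\mu$ is a smooth point of $X_\mu$. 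Writing $U_\lambda(\mu)$ for the number of upward curves $\nu$ with $\nu\subseteq\lambda$, we obtain $\dim T_{e_\mu}X_\lambda=|\mu|+U_\lambda(\mu)$; as the tangent dimension is always at least $\dim X_\lambda=|\lambda|$, the point $e_\mu$ is singular if and only if $U_\lambda(\mu)>|\lambda|-|\mu|$.

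It then remains to prove, combinatorially, that $U_\lambda(\mu)>|\lambda|-|\mu|$ holds precisely when $\mu$ is contained in one of the subdiagrams $\lambda^1,\dots,\lambda^{r-1}$ displayed in the statement, and that these are the maximal partitions with this property. The extra upward curves occur exactly at the $r-1$ reentrant (inner) corners of $\lambda$, one between each pair of consecutive rectangles; I would make this precise by analyzing, corner by corner, the addable border strips of $\mu$ that remain inside $\lambda$. Finally I would verify directly from the explicit shapes that each $\lambda^i$ is a subdiagram of $\lambda$ and that the $\lambda^i$ are pairwise incomparable, which forces $\Sing(X_\lambda)=\bigcup_{i=1}^{r-1}X_{\lambda^i}$ to have exactly $r-1$ irreducible components. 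The delicate step is this combinatorial core—pinning down $U_\lambda(\mu)$ near an inner corner and checking that $\lambda^i$ is the largest $\mu$ for which the count exceeds $|\lambda|-|\mu|$—and it is the part I expect to require the most care; alternatively one may simply invoke \cite[Theorem~5.3]{singloci} and match the indexing of its singular components with the rectangle decomposition of $\lambda$.
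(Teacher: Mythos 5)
First, a point of comparison: the paper gives no proof of this statement at all. Theorem~\ref{thm:sing} is explicitly presented as a restatement of \cite[Theorem 5.3]{singloci} (Lakshmibai--Weyman), so the paper's ``proof'' is precisely the citation you offer as a fallback in your last sentence. In that respect your fallback coincides exactly with what the paper does.

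Your direct argument is a correct framework, and it is essentially the method by which the cited result is actually proved: $\Sing(X_\lambda)$ is closed and $B$-stable, hence a union of Schubert varieties $X_\mu$, so it suffices to test singularity at the $T$-fixed points $e_\mu$; since the Grassmannian is minuscule, $\dim T_{e_\mu}X_\lambda$ equals the number of $T$-stable curves through $e_\mu$ lying in $X_\lambda$; these curves join fixed points whose partitions differ by a rim hook, the downward ones contribute exactly $|\mu|$, and an upward curve lies in $X_\lambda$ if and only if its larger endpoint does; hence $e_\mu$ is singular if and only if $U_\lambda(\mu) > |\lambda| - |\mu|$. All of these reductions are sound (though note that the minuscule tangent-space equality is itself a substantial theorem --- it is the technical heart of \cite{singloci} and its predecessors --- so you are importing heavy machinery, not economizing). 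The genuine gap is that the combinatorial core is never executed: you do not compute $U_\lambda(\mu)$ near the inner corners, do not show that $U_\lambda(\mu) > |\lambda| - |\mu|$ holds precisely when $\mu$ is contained in some $\lambda^i$, and do not verify that the $\lambda^i$ are the maximal such partitions. You defer this as ``the delicate step,'' but that step \emph{is} the theorem; everything you do carry out is the standard reduction to it, and without it one cannot conclude that $\Sing(X_\lambda)$ has exactly $r-1$ components of the stated shapes. As written, the proposal is a correct plan plus a citation, not a self-contained proof: either complete the rim-hook count and the maximality check, or simply invoke \cite[Theorem 5.3]{singloci}, which is all the paper itself does.
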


\begin{figure}[h]
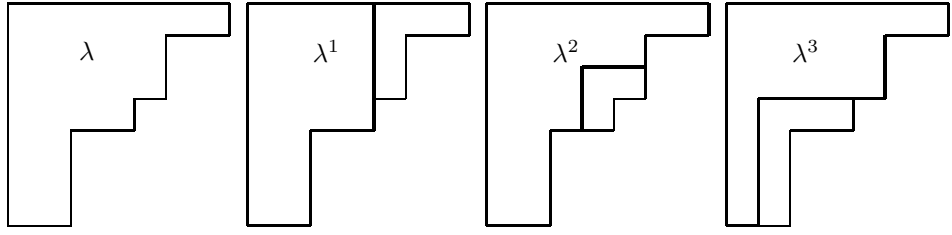

    \[\tableau{12}{ [tl]& [t]& [t]& [t]& [t]& [h]& [hr]\\ [l]& []& []\lambda& []& [r]\\[l]& []& [] & []& [br]\\ [l]& []& [b]& [br]\\ [l]& [r]\\ [l]& [r]\\ [bl]& [br]}
        \ \ \tableau{12}{ [TL]& [T]& [T]& [TR]& [t]& [h]& [hr]\\ [L]& []& []\lambda^1& [R]& [r]\\[L]& & []& [R]& [br]\\ [L]& []& [B]& [BR]\\ [L]& [R]\\ [L]& [R]\\ [BL]& [BR]}
        \ \ \tableau{12}{ [TL]& [T]& [T]& [T]& [T]& [H]& [HR]\\ [L]& []& []\lambda^2& []& [R]\\[L]& []& [R]& [T]& [Tbr]\\ [L]& []& [BR]& [br]\\ [L]& [R]\\ [L]& [R]\\ [BL]& [BR]}
        \ \ \tableau{12}{ [TL]& [T]& [T]& [T]& [T]& [H]& [HR]\\ [L]& []& []\lambda^3& []& [R]\\[L]& []& []& []& [BR]\\ [LR]& [T]& [Tb]& [Tbr]\\ [LR]& [r]\\ [LR]& [r]\\ [BV]& [br]}\]
    \caption{Diagrams \(\lambda^1,\dots,\lambda^{r-1}.\)}
    \label{fig:sing}
\end{figure}

Note that each \(\lambda^i\) is obtained by removing a hook on the boundary of \( \lambda \).
Furthermore, \((\lambda^T)^i=(\lambda^{r-i})^T\) for \(i=1, \dots, r-1\), where \(\lambda^T\) denotes the conjugate (or transpose) partition of \(\lambda\).
Using the fact that \( X_\lambda \cap X_\mu = X_{\lambda \cap \mu} \), we have that \( X_{\lambda^i}\) and \(X_{\lambda^j} \) intersect properly as subvarieties of \(X_\lambda\) if and only if the removed hooks are disjoint.

\begin{notation}
    We will use \(\lambda^0\) to denote the partition such that \(X_{\lambda^0}\) is the intersection of all irreducible components of \(\Sing(X_\lambda)\).
\end{notation}
It follows from \Theorem{sing} that \(\lambda^0 = \lambda^1 \cap \lambda^2 \cap \dots \cap \lambda^{r-1} \).
Note that \( (\lambda^0)^T = (\lambda^T)^0 \).
See \Figure{lambda0} for an illustration of \(\lambda^0\). If \( \lambda = ({a_1}^{b_1}, \dots, {a_r}^{b_r}) \) as above, then
\[
    \lambda^0 = ({a_1}^{b_1-1},(a_2-1)^{b_2},\dots,(a_{r-1}-1)^{b_{r-1}},(a_r-1)^{b_r+1}).
\]

\begin{figure}[H]
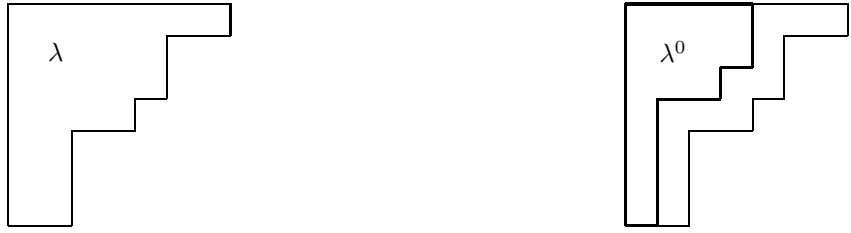

    \begin{subfigure}{0.49\textwidth}
        \[\tableau{12}{ [tl]& [t]& [t]& [t]& [t]& [h]& [hr]\\ [l]& []\lambda& []& []& [r]\\[l]& []& [] & []& [br]\\ [l]& []& [b]& [br]\\ [l]& [r]\\ [l]& [r]\\ [bl]& [br]}\]
    \end{subfigure}
    \begin{subfigure}{0.49\textwidth}
        \[\tableau{12}{ [TL]& [T]& [T]& [TR]& [t]& [h]& [hr]\\ [L]& []\lambda^0& []& [BR]& [r]\\[L]& []& [R]& []& [br]\\ [LR]& [T]& [Tb]& [br]\\ [LR]& [r]\\ [LR]& [r]\\ [BV]& [br]}
        \]
    \end{subfigure}
    \caption{The diagram \(\lambda^0\).}
    \label{fig:lambda0}
\end{figure}

\begin{defn}
    Let \( \xi(\lambda) \) be the size of the longest hook contained in \( \lambda \).
\end{defn}

Note that \( \xi(\lambda) = \xi(\lambda^T) \).
If we represent \( \lambda\) as \(({a_1}^{b_1}, \dots, {a_r}^{b_r}) \) with \( a_r, b_1 > 0 \), then \[ \xi(\lambda) = a_1 + b_1 + \dots + b_r - 1 .\]

\section{The isomorphism problem for Schubert varieties}\label{sec:schub}

One direction of \Theorem{main} is straightforward. We prove it in \Lemma{forward}.

\begin{lemma}\label{lemma:forward}
    Let \(X,\ X'\) be two Grassmannians and let \(X_\lambda\subseteq X,\ X'_\mu\subseteq X'\) be Schubert varieties. If \(\lambda=\mu\text{ or }\lambda^T = \mu\), then \(X_\lambda\cong X'_\mu\) as varieties.
\end{lemma}
\begin{proof}
    By \Lemma{transpose}, we may assume without loss of generality that \( \lambda = \mu \). Let \(m,n,m',n'\) be such that \[X \cong \Gr(m,n)\text{ and }X' \cong \Gr(m', n').\] Let \(M,N\) be such that \[M \geq \max(m,m')\text{ and }N-M \geq \max(n-m,n'-m')\] and let \[Z = \Gr(M,N).\] We identify \(X \) with the set of subspaces \(\Sigma\) of \( \Span\{ e_{M-m+1}, \dots, e_{M-m+n} \}\) of dimension \(m\).
    The closed embedding \(X \hookrightarrow Z\) given by
    \[
        \Sigma \mapsto \Span\{ e_1, \dots, e_{M-m} \} \oplus \Sigma.
    \]
    induces an isomorphism of Schubert varieties \(X_\lambda \cong Z_\lambda\). Similarly, \( X'_\mu \cong Z_\mu = Z_\lambda \).
\end{proof}

We now work on the converse. Let \(X,\ X'\) be two Grassmannians and let \(X_\lambda\subseteq X,\ X'_\mu\subseteq X'\) be Schubert varieties. Assume \(X_\lambda\cong X'_\mu\) as varieties. Then \(\Sing(X_\lambda)\cong\Sing(X'_\mu)\). By \Theorem{sing}, this implies that \(\lambda\) and \(\mu\) consist of the same number of rectangles. We denote this number by \(r\). Moreover, we must have \(X_{\lambda^0}\cong X'_{\mu^0}\) as varieties. By the same reasoning, \(\lambda^0\) and \(\mu^0\) consist of the same number of rectangles. We denote this number by \(r_0\). In addition, we have an isomorphism of Chow groups \(A_i(X_\lambda)\cong A_i(X'_\mu)\) for all \(i\). Size \(i\) sub-diagrams of \(\lambda\) correspond to dimension \(i\) Schubert varieties contained in \(X_\lambda\), whose classes form a basis for \(A_i(X_\lambda)\), and an analogous statement holds for sub-diagrams of \(\mu\). Therefore, \(\lambda\) and \(\mu\) must have the same number of size \(i\) sub-diagrams for all \(i\). In particular, \(|\lambda|= \dim X_\lambda = \dim X'_\mu = |\mu| \) as the size of the largest sub-diagram.

\begin{prop}\label{prop:converse}
    Let \(X,\ X'\) be two Grassmannians and let \(X_\lambda\subseteq X,\ X'_\mu\subseteq X'\) be Schubert varieties. If \(X_\lambda\cong X'_\mu\) as varieties, then \(\lambda=\mu\text{ or }\lambda^T = \mu\).
\end{prop}
\begin{proof}
    Note that \(r-2\leq r_0\leq r\). The proposition follows from \Lemma{nonsing}, \Lemma{0}, \Lemma{1}, and \Lemma{2}.
\end{proof}

\begin{lemma}\label{lemma:nonsing}
    \Proposition{converse} holds when \(X_\lambda\) and \(X'_\mu\) are nonsingular.
\end{lemma}
\begin{proof}
    Note that being nonsingular is equivalent to \(r \leq 1\).
    When \(r=0\), then \(\lambda=\mu=\varnothing\).

    When \(r = 1\), we have \[\lambda=(a^b),\ \mu=({a'}^{b'})\] for some positive integers \(a, b, a', b'\) such that \[ab=|\lambda|=|\mu|=a'b'.\] Up to transposing some diagrams, we may assume \(a\leq b\) and \(a'\leq b'\). Suppose \( a > a' \), then every diagram of size \(a'+1\) is a sub-diagram of \(\lambda\), but the single-row diagram \((a'+1)\) is not a sub-diagram of \(\mu\) (see \Figure{rect}), which is a contradiction. Therefore, \(a=a'\) and \(b=b'\).

    \begin{figure}[h]
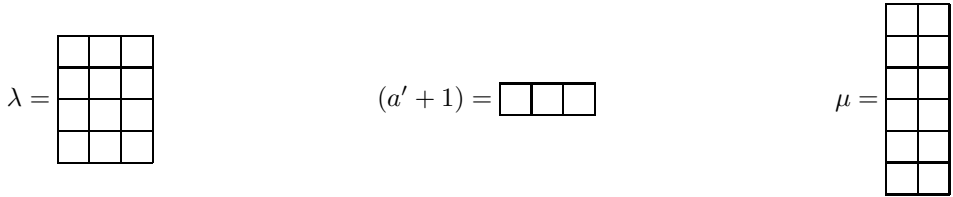

        \begin{subfigure}{0.32\textwidth}
            \centering
            \(\lambda=\tableau{12}{ [a]& [a]& [a]\\ [a]& [a]& [a]\\[a]& [a]& [a]\\[a]& [a]& [a]}\)
        \end{subfigure}
        \begin{subfigure}{0.32\textwidth}
            \centering
            \((a'+1)=\tableau{12}{ [a]& [a]& [a]}\)
        \end{subfigure}
        \begin{subfigure}{0.32\textwidth}
            \centering
            \(\mu=\tableau{12}{ [a]& [a]\\ [a]& [a]\\[a] & [a]\\[a]& [a]\\[a]& [a]\\[a]& [a]\\}\)
        \end{subfigure}
        \caption{Diagrams \(\lambda,\  (a'+1),\ \mu\).}
        \label{fig:rect}
    \end{figure}
\end{proof}

We will prove \Lemma{0}, \Lemma{1}, and \Lemma{2} by induction on \(|\lambda|\).

\begin{lemma}\label{lemma:0}
    \Proposition{converse} holds when \(r_0=r\geq 2\).
\end{lemma}

\begin{proof}
    In this case,
    \[
        \lambda=({a_1}^{b_1},\dots,{a_r}^{b_r}),\ \mu=({a_1'}^{b_1'},\dots,{a_r'}^{b_r'}),
    \]
    with \( a_1 > a_2 > \dots >a_r>1\); \(a_1' > a_2' > \dots > a_r'>1\); \(b_1,\ b_1'>1\); and \( b_i,\ b_i' > 0 \) for \(i=2,\dots,r\). Note that
    \[
        \lambda^0=({a_1}^{b_1-1},(a_2-1)^{b_2},\dots,(a_{r-1}-1)^{b_{r-1}},(a_r-1)^{b_r+1})
    \]
    (this expression contains no zeroes), and an analogous expression holds for \(\mu^0\). Since \(X_{\lambda^0}\cong X'_{\mu^0}\) and they are of smaller dimension, by induction hypothesis, we have
    \[
        \text{either } \lambda^0=\mu^0\text{ or }(\lambda^T)^0=(\lambda^0)^T=\mu^0.
    \]
    Up to transposing \(\lambda\), we may assume \(\lambda^0=\mu^0\), which implies \(a_i=a_i',\ b_i=b_i'\text{ for all }i\); in other words, \(\lambda=\mu\).
\end{proof}

\begin{lemma}\label{lemma:1}
    \Proposition{converse} holds when \(r_0+1=r\geq 2\).
\end{lemma}

\begin{proof}
    Up to transposing \(\lambda\) and \(\mu\), we may assume
    \[
        \lambda=(a_1,{a_2}^{b_2},\dots,{a_r}^{b_r}),\ \mu=(a_1',{a_2'}^{b_2'},\dots,{a_r'}^{b_r'}),
    \]
    with \( a_1 > a_2 > \dots >a_r>1\); \(a_1' > a_2' > \dots > a_r'>1\);  and \( b_i,\ b_i' > 0 \) for \(i=2,\dots,r\).

    Suppose \(r=2\).

    We have \[\lambda=(a_1, {a_2}^{b_2}),\ \mu=(a_1',{a_2'}^{b_2'}),\ \lambda^0=((a_2-1)^{b_2+1}),\text{ and }\mu^0=((a_2'-1)^{b_2'+1}).\] By induction hypothesis, we have \[\text{either }\lambda^0=\mu^0\text{ or }(\lambda^T)^0=(\lambda^0)^T=\mu^0.\]

    If \(\lambda^0=\mu^0\), then \(a_2=a_2'\) and \(b_2=b_2'\). Since \(|\lambda|=|\mu|\), we must also have \(a_1=a_1'\) and therefore \(\lambda=\mu\).

    If \((\lambda^0)^T=\mu^0\), then we have
    \begin{equation}\label{eqn:two=s}
        a_2-1=b_2'+1\text{ and }a_2'-1=b_2+1.
    \end{equation}
    In this case, if \(a_2\leq b_2+1\), then every diagram of size \(a_2\) is a sub-diagram of \(\lambda\), but the single-column diagram \((1^{a_2})\) is not a sub-diagram of \(\mu\) (since \(b_2'+1<a_2\)), which is a contradiction (see \Figure{=} for an illustration); similarly, if \(a_2'\leq b_2'+1\), then every diagram of size \(a_2'\) is a sub-diagram of \(\mu\), but the single-column diagram \((1^{a_2'})\) is not a sub-diagram of \(\lambda\) (since \(b_2+1<a_2'\)), which is a contradiction. Therefore, \[a_2-1 \geq b_2+1\text{ and }a_2'-1\geq b_2'+1.\] In view of \eqn{two=s}, they imply \(a_2-1=a_2'-1=b_2+1=b_2'+1\). Since \(|\lambda|=|\mu|\), we must then have \(\lambda=\mu\).
    \begin{figure}[h]
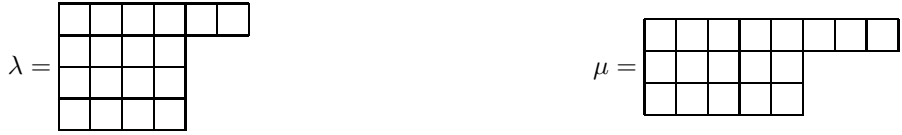

        \begin{subfigure}{0.49\textwidth}
            \centering
            \(\lambda = \tableau{12}{ [a]& [a]& [a]& [a]& [a]& [a]\\ [a]& [a]& [a]& [a]\\[a]& [a]& [a]& [a]\\[a]& [a]& [a]& [a]}\)
        \end{subfigure}
        \begin{subfigure}{0.49\textwidth}
            \centering
            \(\mu = \tableau{12}{ [a]& [a]& [a]& [a]& [a]& [a]& [a]& [a]\\ [a]& [a]& [a]& [a]& [a]\\[a]& [a]& [a]& [a]& [a]}\)
        \end{subfigure}
        \caption{The case \((\lambda^0)^T=\mu^0\) and \(a_2-1<b_2+1\).}
        \label{fig:=}
    \end{figure}

    Now assume \(r>2\).

    The subvarieties \(X_{\lambda^1},\ X_{\lambda^{r-1}}\) (respectively \(X'_{\mu^1},\ X'_{\mu^{r-1}}\)), are the only irreducible components of \(\Sing(X_\lambda)\) (respectively \(\Sing(X'_\mu)\)) that intersect all but one other irreducible component properly. Hence, \(X_{\lambda^1}\) and \(X_{\lambda^{r-1}}\) are isomorphic to \(X'_{\mu^1}\) and \(X'_{\mu^{r-1}}\) (not necessarily in this order). Using \Theorem{sing} we have the expressions
    \begin{equation*}
        \lambda^1 = ({(a_2-1)}^{b_2+1}, {a_3}^{b_3}, \dots, {a_r}^{b_r}),
    \end{equation*}
    \begin{equation*}
        \label{eqn:formulaofr-1}
        \lambda^{r-1} = (a_1, {a_2}^{b_2}, \dots, {a_{r-2}}^{b_{r-2}}, {a_{r-1}}^{b_{r-1}-1}, {(a_r-1)}^{b_r+1}),
    \end{equation*}
    and analogous expressions hold for \(\mu^1,\ \mu^{r-1}\). Note that
    \[
        \xi(\lambda^1)=a_2+b_2+\dots+b_r-1<a_1+b_2+\dots+b_r=\xi(\lambda^{r-1})
    \]
    and a similar inequality holds on the \(\mu\)-side. If \( X_{\lambda^1} \cong X'_{\mu^{r-1}}\), equivalently \( X_{\lambda^{r-1}} \cong X'_{\mu^1}\), then by the induction hypothesis we have that \(\lambda^1 = \mu^{r-1} \) or \( (\lambda^1)^T = \mu^{r-1} \) (the analogous equalities hold for \(\lambda^{r-1}, \mu^1\)). Hence,
    \[
        \xi(\lambda^{r-1}) > \xi(\lambda^1) = \xi(\mu^{r-1}) > \xi(\mu^1) = \xi(\lambda^{r-1}),
    \]
    a contradiction. Hence, we must have that \( X_{\lambda^1} \cong X'_{\mu^1}\) and \( X_{\lambda^{r-1}} \cong X'_{\mu^{r-1}}\). By the induction hypothesis we get that

    \begin{equation}
        \label{eqn:1}\text{either }\lambda^1=\mu^1\text{ or }(\lambda^1)^T=\mu^1;
    \end{equation}

    \begin{equation}
        \label{eqn:r-1}\text{either }\lambda^{r-1}=\mu^{r-1}\text{ or }(\lambda^{r-1})^T=\mu^{r-1}.
    \end{equation}

    If \(\lambda^1=\mu^1\), then \begin{equation*}a_2=a_2',\dots, \ a_r=a_r',\ b_2+b_3 = b_2'+b_3',\ b_4=b_4',\dots,\ b_r=b_r'.
    \end{equation*}
    Since \eqn{r-1} implies
    \(
    \xi(\lambda^{r-1})=\xi(\mu^{r-1}),
    \)
    we must also have \(a_1=a_1'\). Since \(|\lambda|=|\mu|\), we must then have \[b_2=b_2',\ b_3=b_3'.\] Therefore, \(\lambda=\mu\).

    If \(\lambda^{r-1}=\mu^{r-1}\), then
    \begin{equation*}
        a_1=a_1',\dots,\ a_{r-2}=a_{r-2}',\  a_r=a_r',\ b_2=b_2',\dots,\  b_r=b_r'.
    \end{equation*}
    Since \(|\lambda|=|\mu|\), we must then have \(a_{r-1} = a_{r-1}'\) and \(\lambda=\mu\).

    Now assume \((\lambda^1)^T=\mu^1\) and \((\lambda^{r-1})^T=\mu^{r-1}\). Note that
    \begin{equation*}
        (\lambda^1)^T=((1+b_2+\dots+b_r)^{a_r}, (1+b_2+\dots+b_{r-1})^{a_{r-1}-a_r}, \dots,(1+b_2+b_3)^{a_3-a_4},(1+b_2)^{a_2-a_3-1})
    \end{equation*}
    and
    \begin{equation*}
        (\lambda^{r-1})^T=((1+b_2+\dots+b_r)^{a_r-1},(b_2+\dots+b_{r-1})^{a_{r-1}-a_r+1}, (1+b_2+\dots+b_{r-2})^{a_{r-2}-a_{r-1}},\dots,(1+b_2)^{a_2-a_3},1^{a_1-a_2}).
    \end{equation*}
    The identity \((\lambda^{r-1})^T=\mu^{r-1}\) implies \(a_r=2\) and \((\lambda^1)^T=\mu^1\) implies \(a_r\geq b_2'+1\),  thus \(b_2'=1\).

    If \( r=3 \), then \((\lambda^{r-1})^T=\mu^{r-1}\) and the above imply that \[(1+b_2+b_3,b_2^{a_2-1},1^{a_1-a_2})=(\lambda^{r-1})^T=\mu^{r-1}=(a_1',(a_3'-1)^{b_3'+1}),\] which then
    implies \( a_2= 1 \), a contradiction to \(a_2 > a_3\).
    Otherwise, \(r > 3 \) and looking at the second rectangle from the top in \((\lambda^{r-1})^T=\mu^{r-1}\) gives
    \[
        a_{r-1}-a_r+1=b_2'\text{ when }b_{r-1}\neq1
    \]
    and \[
        (a_{r-1}-a_r+1)+(a_{r-2}-a_{r-1})=b_2'\text{ when }b_{r-1}=1.
    \]
    Either way, we must have \(b_2'>1\), which is again a contradiction to \(b_2'=1\) from above.
\end{proof}

\begin{lemma}\label{lemma:2}
    \Proposition{converse} holds when \(r_0+2=r\geq 2\).
\end{lemma}

\begin{proof}
    In this case
    \[
        \lambda=(a_1,{a_2}^{b_2},\dots,{a_{r-1}}^{b_{r-1}},1^{b_r}),\ \mu=(a_1',{a_2'}^{b_2'},\dots,{a_{r-1}'}^{b_{r-1}'},1^{b_r'}),
    \]
    with \( a_1 > a_2 > \dots >a_{r-1}>1,   a_1' > a_2' > \dots > a_{r-1}'>1\) and \( b_i,\ {b_i}' > 0 \) for \(i=2,\dots,r\).

    If \(r=2\), up to transposing some diagrams, we may assume \[a_1\leq b_2+1\text{ and }a_1'\leq b_2'+1.\] Suppose \(a_1>a_1'\). We count sub-diagrams of \(\lambda\) and \(\mu\) of size \( {a_1}'+1\). They are necessarily of the form \((c,1^d)\). \(\lambda \) contains \( a_1\geq {a_1}'+1 \) such sub-diagrams, while \(\mu\) contains fewer because the single-row diagram \((a_1'+1)\) is not a sub-diagram of \(\mu\) (see \Figure{rect2}). This is a contradiction. Therefore, \(a_1=a_1'\), \(b_2=b_2'\), i.e. \(\lambda=\mu\).
    \begin{figure}[h]
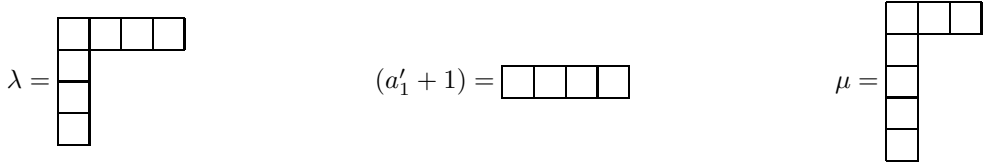

        \begin{subfigure}{0.32\textwidth}
            \centering
            \(\lambda=\tableau{12}{ [a] & [a] & [a]& [a]\\ [a]\\[a]\\[a]}\)
        \end{subfigure}
        \begin{subfigure}{0.32\textwidth}
            \centering
            \((a_1'+1) = \tableau{12}{ [a]& [a]& [a]& [a]}\)
        \end{subfigure}
        \begin{subfigure}{0.32\textwidth}
            \centering
            \(\mu=\tableau{12}{ [a] & [a]& [a]\\ [a]\\[a]\\[a]\\[a]}\)
        \end{subfigure}
        \caption{Diagrams \(\lambda,\ (a_1'+1),\ \mu\).}
        \label{fig:rect2}
    \end{figure}

    Now assume \(r>2\), and in this case \[\lambda^0=((a_2-1)^{b_2},\dots,(a_{r-1}-1)^{b_{r-1}}),\] and an analogous expression holds for \(\mu^0\). Again, we may assume \(\lambda^0=\mu^0\), which implies \begin{equation}\label{eqn:2tor-1}a_i=a_i',\ b_i=b_i'\text{ for }i=2,\dots,r-1.\end{equation}

    We have \[\lambda^1 = ({(a_2-1)}^{b_2+1}, {a_3}^{b_3}, \dots,{a_{r-1}}^{b_{r-1}}, 1^{b_r}),\] \[\lambda^{r-1}=(a_1, {a_2}^{b_2},\dots,{a_{r-2}}^{b_{r-2}},{a_{r-1}}^{b_{r-1}-1}),\] and similar expressions hold for \(\mu^1\) and \(\mu^{r-1}\). Considering proper intersections of components as in the proof of \Lemma{1}, we must have that either
    \[
        X_{\lambda^1} \cong X'_{\mu^1} \text{ and } X_{\lambda^{r-1}} \cong X'_{\mu^{r-1}}
    \]
    or
    \[
        X_{\lambda^1} \cong X'_{\mu^{r-1}} \text{ and } X_{\lambda^{r-1}} \cong X'_{\mu^1}.
    \]
    Since the first row of \(\lambda^1 \) has length \( a_2-1 \), but the first row of \( \mu^{r-1} \) has length \( a_1' > a_2' = a_2 \), we have \( \lambda^1 \neq \mu^{r-1}\). Similarly, \(\lambda^{r-1} \neq \mu^1 \). Since the left-most two columns of \(\lambda^1 \) are of distinct lengths, but the top-most two rows of \( {\mu^1} \) are of the same length, we have \( (\lambda^1)^T \neq \mu^1\). Similarly, \( (\lambda^{r-1})^T \neq \mu^{r-1} \). Therefore, we are in either of the following two cases:
    \begin{enumerate}
        \item[\namedlabel{itm:a}{(a)}] \(\lambda^1=\mu^1\) and \(\lambda^{r-1}=\mu^{r-1}\);
        \item[\namedlabel{itm:b}{(b)}] \((\lambda^1)^T=\mu^{r-1}\) and \((\lambda^{r-1})^T=\mu^1\).
    \end{enumerate}

    Assume \(r=3\). In case \ref{itm:a}, \(\lambda^1=\mu^1\) implies \[a_2=a_2',\ b_2+b_3=b_2'+b_3',\] and \(\lambda^2=\mu^2\) implies \[a_1=a_1',\ b_2=b_2'.\] Together, they imply \(\lambda=\mu\). In case \ref{itm:b}, \[\lambda^0=\mu^0 = \mu^1 \cap \mu^{2} = (\lambda^{2})^T \cap (\lambda^{1})^T = (\lambda^T)^1\cap(\lambda^T)^2 = (\lambda^T)^0.\] So we may transpose \(\lambda\) to reduce to case \ref{itm:a}.

    Finally, assume \(r>3\). In case \ref{itm:a}, \[\lambda = \lambda^1 \cup \lambda^{r-1} =\mu^1 \cup \mu^{r-1} = \mu;\] in case \ref{itm:b}, \[\lambda^T=  (\lambda^T)^1\cup(\lambda^T)^{r-1}= (\lambda^{r-1})^T\cup(\lambda^1)^T = \mu^1 \cup \mu^{r-1} = \mu.\]
\end{proof}

\begin{thm} \label{thm:main}
    Let \(X,\ X'\) be two Grassmannians and let \(X_\lambda\subseteq X,\ X'_\mu\subseteq X'\) be Schubert varieties. Then \(X_\lambda\cong X'_\mu\) as varieties if and only if \(\lambda=\mu\text{ or }\lambda^T = \mu\).
\end{thm}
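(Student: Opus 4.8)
The plan is to read off \(\lambda\), up to transpose, from the intrinsic geometry of \(X_\lambda\), using two isomorphism invariants: the singular locus (via \Theorem{sing}) and the graded Chow ring (via the Schubert basis). The ``if'' direction is immediate from \Proposition{rich}: specialized to Schubert varieties it says precisely that \(\lambda=\mu\) or \(\lambda^T=\mu\) implies \(X_\lambda\cong X'_\mu\), even across different Grassmannians. So I focus on the ``only if'' direction, and prove by strong induction on \(d:=|\lambda|\) that \(X_\lambda\cong X'_\mu\) forces \(\mu\in\{\lambda,\lambda^T\}\). Since \(\dim X_\lambda=|\lambda|\) is intrinsic, any isomorphism gives \(|\mu|=|\lambda|=d\) at the outset.

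For the base of the induction, suppose \(X_\lambda\) is smooth. By \Theorem{sing} this happens exactly when \(\Sing(X_\lambda)=\emptyset\), i.e.\ when \(r=1\) and \(\lambda=a^b\) is a single rectangle, in which case \(X_\lambda\cong\Gr(b,a+b)\); likewise \(X'_\mu\cong\Gr(b',a'+b')\). Here I invoke the Chow ring: the graded pieces \(A^k(X_\lambda)\) have ranks equal to the numbers of subdiagrams of \(\lambda\) of codimension \(k\), so the subdiagram generating polynomial \(\sum_{\nu\subseteq\lambda}t^{|\nu|}\) is an isomorphism invariant. For a rectangle this polynomial is the Gaussian binomial \(\binom{a+b}{b}_{t}\), whose degree and coefficients recover the unordered pair \(\{a,b\}\); this is essential because distinct Grassmannians can share a dimension (e.g.\ \(\Gr(1,7)\) and \(\Gr(2,5)\) are both six-dimensional). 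Hence \(\{a,b\}=\{a',b'\}\) and \(\mu\in\{\lambda,\lambda^T\}\), as needed.

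For the inductive step assume \(X_\lambda\) (hence \(X'_\mu\)) is singular, and let \(\phi\colon X_\lambda\xrightarrow{\sim}X'_\mu\). Then \(\phi\) carries \(\Sing(X_\lambda)\) onto \(\Sing(X'_\mu)\), and since by \Theorem{sing} the irreducible components of these singular loci are themselves the Schubert varieties \(X_{\lambda^1},\dots,X_{\lambda^{r-1}}\) and \(X'_{\mu^1},\dots,X'_{\mu^{s-1}}\), \(\phi\) induces a bijection between them; comparing counts gives \(r=s\). Whether two distinct components meet properly is intrinsic (both \(\codim\) in the ambient Schubert variety and the set-theoretic intersection are preserved by \(\phi\)), and by the hook description after \Theorem{sing} the components \(X_{\lambda^i},X_{\lambda^j}\) meet properly exactly for \(|i-j|\ge 2\), since consecutive removed hooks share a box while non-consecutive ones are disjoint. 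Thus the ``improper intersection'' graph on the components is the path \(1-2-\cdots-(r-1)\), whose only automorphisms are the identity and the reversal \(i\mapsto r-i\). As \((\lambda^T)^i=(\lambda^{r-i})^T\), replacing \(\mu\) by the isomorphic \(\mu^T\) (which reverses the component order) lets me assume the bijection is order-preserving, so \(\phi(X_{\lambda^i})=X'_{\mu^i}\) for every \(i\). Each restriction \(\phi\colon X_{\lambda^i}\xrightarrow{\sim}X'_{\mu^i}\) is an isomorphism of strictly smaller Schubert varieties, so the induction hypothesis yields \(\mu^i\in\{\lambda^i,(\lambda^i)^T\}\) for each \(i\).

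The crux, and the step I expect to be hardest, is \emph{coherence}: ruling out that the transpose is taken for some indices and not others. Here I would again use that \(\phi\) respects intersections, so it identifies \(X_{\lambda^i\cap\lambda^{i+1}}=X_{\lambda^i}\cap X_{\lambda^{i+1}}\) with \(X'_{\mu^i\cap\mu^{i+1}}\); induction then forces \(\mu^i\cap\mu^{i+1}\) to equal \(\lambda^i\cap\lambda^{i+1}\) up to transpose. Because \(\lambda^i\cap\lambda^{i+1}\) and \(\mu^i\cap\mu^{i+1}\) are subdiagrams of the already-identified \(\lambda^i\) and \(\mu^i\), once the orientation of one component is fixed this shared-overlap constraint is rigid enough to force the neighboring orientation to agree; propagating along the path shows that either \(\mu^i=\lambda^i\) for all \(i\) or \(\mu^i=(\lambda^i)^T\) for all \(i\). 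In the first case \(\lambda\) is reconstructed from its coherent components (for \(r\ge 3\) essentially as \(\bigcup_i\lambda^i\), with the finitely many shared corner boxes pinned down by \(|\mu|=|\lambda|\); for \(r=2\) the single component \(\lambda^1\) together with \(d\) already determines \(\lambda\)), giving \(\lambda=\mu\); in the second case the same reconstruction gives \(\lambda^T=\mu\). This completes the induction, and with the ``if'' direction proves the theorem. The combinatorial rigidity in the coherence step is the main obstacle; the rest is an organized use of the two stated invariants.
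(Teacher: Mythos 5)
Your skeleton --- induction on \(|\lambda|\), matching up the irreducible components of \(\Sing\) via \Theorem{sing}, reading off which components meet properly, and using Chow groups --- is the same as the paper's, but there is a genuine gap exactly at the step you flag as the crux, and for \(r=2\) it is not a missing detail but a false claim. You assert that for \(r=2\) ``the single component \(\lambda^1\) together with \(d\) already determines \(\lambda\).'' This fails: take \(\lambda=(6,4)\) and \(\mu=(3,3,3,1)\). Both consist of \(r=2\) rectangles, both have dimension \(10\), and both have irreducible singular locus indexed by \((3,3)\) (so the singular loci are isomorphic, both being \(\Gr(2,5)\)), yet \(\mu\neq\lambda\) and \(\mu\neq\lambda^T=(2,2,2,2,1,1)\). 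Similarly, \(\lambda=(4,1)\) and \(\mu=(3,1,1)\) are hooks of size \(5\) whose singular loci are both a single point. So the invariants you permit yourself in the inductive step (dimension plus the singular locus and its component structure) cannot separate these pairs, and no care in ``reconstruction'' can repair that. What is missing is the invariant the paper uses throughout, not only in the smooth case: the ranks of the graded pieces \(A_i(X_\lambda)\), i.e.\ the number of subdiagrams of \(\lambda\) of each size. In the examples above, \((6,4)\) contains \(2\) subdiagrams of size \(3\) while \((3,3,3,1)\) contains \(3\); this counting is precisely how the paper's Case 2 (\(r=2\)) and Case 3 (\(r=2\)) arguments dispose of these situations.

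The coherence step for \(r\geq 3\) is also only asserted (``rigid enough to force the neighboring orientation to agree''), and this is where the paper spends most of its effort: after matching endpoint components \(X_{\lambda^1},X_{\lambda^{r-1}}\) with \(X'_{\mu^1},X'_{\mu^{r-1}}\) (ruling out the swap needs the longest-hook invariant \(\xi\), another ingredient absent from your proposal), the paper still has to write out \((\lambda^1)^T\) and \((\lambda^{r-1})^T\) explicitly, use \(|\lambda|=|\mu|\), and split into cases governed by \(r_0\), the number of rectangles of \(\lambda^0\); the mixed-orientation scenarios are eliminated by explicit numerical contradictions, not by an abstract rigidity principle. Note also that your reconstruction \(\lambda=\bigcup_i\lambda^i\) is false for \(r=3\) (the union misses the box shared by the two hooks), so even the all-identity case needs an argument there. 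In short: your path-graph observation (components meet properly iff \(|i-j|\geq 2\)) and your Gaussian-binomial base case are correct and agree with the paper, but the inductive step as proposed would fail at \(r=2\) and is unproved at the coherence stage; both require the subdiagram-counting and \(\xi\)-comparison machinery that constitutes the body of the paper's proof of \Theorem{main}.
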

\begin{proof}
    It follows from \Lemma{forward} and \Proposition{converse}.
\end{proof}

\section{The isomorphism problem for Richardson varieties}\label{sec:rich}

One can extend this problem to Richardson varieties, which are intersections of a Schubert variety \(X_{\mu^\vee}\) and an opposite Schubert variety \(X^\lambda\) in a Grassmannian. If the diagram \(\lambda\) is not contained in the diagram \(\mu^\vee\), then the Richardson variety is empty. We restrict our attention to non-empty Richardson varieties \(X_\theta\), which are indexed by skew diagrams \(\theta = \mu^\vee/\lambda\) consisting of boxes in \(\mu^\vee\) that are not in \(\lambda\), where \(\lambda\subseteq\mu^\vee\). While the definition of \(X_\theta\) depends on \(\lambda\) and \(\mu\), it follows from \cite[Lemma 3.2]{buch_pieri_2012} that the isomorphism class of \(X_\theta\) depends only on the skew diagram \(\theta\), where only the boxes in \(\theta\) and their relative positions are remembered. For two skew diagrams \(\theta\), \(\theta'\), we write \(\theta=\theta'\) if they are the same in this sense (see \Figure{relative}).

\begin{figure}[H]
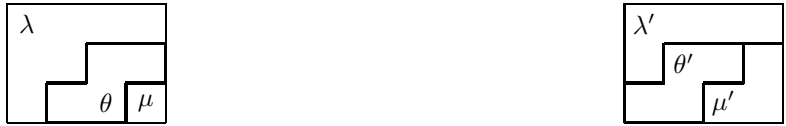

    \begin{subfigure}{0.49\textwidth}
        \[\tableau{15}{
            [tl] \lambda & [t]& [tB]& [tBr]\\
            [l] & [RB]  & []  & [BR]\\
            [lbR]& [B]& [BR] \theta & [rb]\mu }\]
    \end{subfigure}
    \begin{subfigure}{0.49\textwidth}
        \[\tableau{15}{
            [tl] \lambda' & [tB]& [tB]& [tbr]\\
            [lRB] & [] \theta' & [BR] & [tr]\\
            [LB]& [BR]& [b] \mu'& [br]}\]
    \end{subfigure}
    \caption{An example where \(\theta=\theta'\).}
    \label{fig:relative}
\end{figure}

Let \( \theta \) be a skew diagram, we define \(P(\theta) \) to be the poset on the boxes of \( \theta \) with the ordering induced by the covering relations \[ x < y \text{ if }x\text{ is immediately to the left or above }y.\] See \Figure{posetex} for an illustration. As before, we use \(|\theta| = |P(\theta)|\) to denote the number of boxes in \( \theta \).

For a poset \(P\), we will write \(P^{op}\) for its opposite poset, where the underlying set is the same but all orders are reversed. We adopt the definitions related to the connectedness of a poset in \cite{schroder2003ordered}. We say that two posets \(P,\ Q\) are \textit{semi-isomorphic}, denoted as \(P\sim Q\), if there is a bijection \(f\) between their sets of connected components, and for each connected component \(P_i\) of \(P\), either \(P_i\cong f(P_i)\) or \(P_i^{op}\cong f(P_i)\).

\begin{figure}[H]
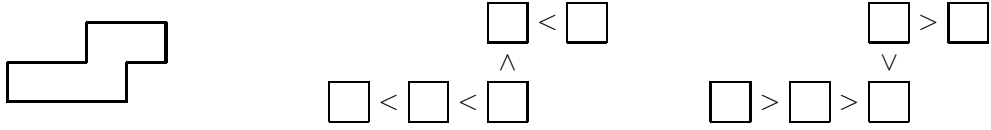

    \begin{subfigure}{0.30\textwidth}
        \[\tableau{15}{
            &[R]  & [T] & [TBR]\\
            [LBT]&[TB]& [BR] &  }\]
    \end{subfigure}
    \begin{subfigure}{0.30\textwidth}
        \[\tableau{15}{
            &&&     & [a] & []< & [a]\\
            && &     & [] \wedge & & \\
            [a]& [] < & [a]& []< & [a]  & &   }\]
    \end{subfigure}
    \begin{subfigure}{0.30\textwidth}
        \[\tableau{15}{
            &&&     & [a] & []> & [a]\\
            && &     & [] \vee & & \\
            [a]& [] > & [a]& []> & [a]  & &   }\]
    \end{subfigure}
    \caption{Example of \(\theta\), \(P(\theta)\), and \(P(\theta)^{op}\).}
    \label{fig:posetex}
\end{figure}

Let \(X,\ X'\) be two Grassmannians and \(X_\theta\subseteq X,\ X'_{\theta'}\subseteq X'\) be Richardson varieties. We will prove:
\begin{prop}\label{prop:if}
    If \(P(\theta)\sim P(\theta')\), then \(X_\theta\cong X'_{\theta'}\) as varieties.
\end{prop}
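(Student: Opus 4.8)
The plan is to reduce to the case of connected skew diagrams by a product decomposition, and then to show that for a connected skew diagram the isomorphism type of the Richardson variety is controlled by $P(\theta)$ only up to isomorphism and opposite. I would begin by recording the shape of the connected components. Since each row and each column of a skew diagram is an interval of boxes, any two boxes sharing a row or column lie in the same component of $P(\theta)$; hence distinct components occupy disjoint sets of rows and disjoint sets of columns. Because the left and right borders of a skew diagram both move weakly left as one descends, these components automatically cascade from the upper right to the lower left, so $\theta$ is already arranged as a block-anti-diagonal union of its components $\theta_1,\dots,\theta_s$. The first key step is then the geometric decomposition
\[
    X_\theta \cong X_{\theta_1} \times \cdots \times X_{\theta_s}.
\]
I would prove this by realizing $X_\theta$ in a Grassmannian $\Gr(m,n)$ whose ambient space $\C^n$ splits as a direct sum of coordinate subspaces indexed by the blocks, and checking that the incidence conditions cutting out $X_{\mu^\vee}\cap X^\lambda$ decouple block by block, so that a subspace lying in $X_\theta$ splits compatibly. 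Translation invariance of the isomorphism type (\cite[Lemma 3.2]{buch_pieri_2012}) lets me insert separating empty rows and columns freely.

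Next I would isolate two symmetry operations on skew diagrams and their effect on the poset. Transposing a skew diagram, $\theta\mapsto\theta^T$, leaves $P(\theta)$ unchanged as an abstract poset, since the order reads $(i,j)\le(i',j')$ iff $i\le i'$ and $j\le j'$, which is symmetric in the two coordinates; and it induces an isomorphism $X_\theta\cong X_{\theta^T}$ via the duality $\Gr(m,n)\cong\Gr(n-m,n)$, $\Sigma\mapsto\Sigma^\perp$. Rotating a skew diagram by $180^\circ$, $\theta\mapsto\theta^{\mathrm{rot}}$, reverses the order, so $P(\theta^{\mathrm{rot}})=P(\theta)^{op}$, and it induces $X_\theta\cong X_{\theta^{\mathrm{rot}}}$ via the automorphism $w_0$ of the Grassmannian, using $w_0(X^\lambda)=X_{\lambda^\vee}$ to swap the two Schubert conditions defining the Richardson variety.

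The combinatorial heart is a reconstruction statement: a connected skew diagram is determined, up to translation and transposition, by its poset $P(\theta)$. I would grade $P(\theta)$ by its rank function $\rho$, which agrees with the antidiagonal coordinate $i+j$ because every cover raises $i+j$ by one, and then recover the content $j-i$ as a second $\Z$-valued function $\sigma$ that increases by $1$ along ``rightward'' covers and decreases by $1$ along ``downward'' covers. The only freedom in two-coloring the covering edges into rightward and downward classes, once one verifies consistency around the diamonds coming from $2\times 2$ blocks and uses connectedness to propagate a single choice, is the global swap of the two classes, which is precisely transposition; the pair $(\rho,\sigma)$ then embeds $P(\theta)$ back onto the boxes of $\theta$ or of $\theta^T$. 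Assembling, given $P(\theta)\sim P(\theta')$ with component bijection $f$: for each $i$, either $P(\theta_i)\cong P(\theta'_{f(i)})$, whence $\theta'_{f(i)}\in\{\theta_i,\theta_i^T\}$ and $X_{\theta_i}\cong X_{\theta'_{f(i)}}$ by the transpose isomorphism; or $P(\theta_i)^{op}\cong P(\theta'_{f(i)})$, whence $P(\theta_i^{\mathrm{rot}})\cong P(\theta'_{f(i)})$ and the same reconstruction together with the rotation isomorphism give $X_{\theta_i}\cong X_{\theta_i^{\mathrm{rot}}}\cong X_{\theta'_{f(i)}}$. The product decomposition then yields $X_\theta\cong\prod_i X_{\theta_i}\cong\prod_i X_{\theta'_{f(i)}}\cong X_{\theta'}$.

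I expect the main obstacle to be the product decomposition: producing an actual isomorphism of varieties, rather than a bijection on points or a statement about the open cell, requires showing that the defining incidence conditions genuinely decouple across the diagonal blocks and that the splitting holds globally over the whole Richardson variety. By comparison, the reconstruction lemma, although it demands care about connectivity and about the consistency of the edge two-coloring, is essentially combinatorial bookkeeping, and the two symmetry isomorphisms are standard.
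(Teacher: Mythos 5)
Your overall architecture is the same as the paper's: reduce to connected components via the product decomposition from \cite[Lemma 3.2]{buch_pieri_2012}, realize the two poset symmetries geometrically (transposition via $\Gr(m,n)\cong\Gr(n-m,n)$, and order-reversal via $w_0$ --- your $\theta^{\mathrm{rot}}$ is exactly the paper's $\theta^{\dagger}$), and rest everything on a reconstruction lemma saying that a connected skew diagram is determined by its poset up to transposition. The gap is in your proof of that reconstruction lemma (the paper's \Lemma{strongskew} and \Corollary{cor12}). Your claim that diamond consistency plus connectedness forces the two-coloring of cover edges to be unique up to a global swap is false: diamond relations only tie together edges lying in a common $2\times 2$ block, and connectedness of the Hasse diagram does not transmit a choice across an edge that lies in no diamond. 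The simplest counterexample is $\theta$ a single row of three boxes: $P(\theta)$ is a $3$-chain whose Hasse diagram is a path with no diamonds and no vertex having two upper or two lower covers, so all four colorings of its two edges are locally consistent; the coloring (rightward, downward) maps the chain injectively onto the staircase $\{(1,1),(1,2),(2,2)\}$, which is neither $\theta$ nor $\theta^{T}$. Thus $(\rho,\sigma)$ is not pinned down by your constraints, and nothing in your argument rules out that the coloring pulled back from $\theta'$ through $\phi$ is one of these stray colorings.

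The missing ingredient is the skew condition on the target diagram; it must enter the combinatorial argument somewhere, and in your sketch it never does. One way to repair the proof: observe that in a skew diagram a ``turn'' forces a diamond --- if a box has a left neighbor and a lower neighbor (or an upper neighbor and a right neighbor), then the fourth box of the corresponding $2\times 2$ square must also be present, because the left and right endpoints of the rows of a skew diagram weakly decrease going down. Contrapositively, at a box with exactly one lower cover and one upper cover sharing no diamond, the two edges must receive the same color. Once this constraint is added, any two edges meeting at a vertex are tied to one another (equal if they go straight through, opposite if they are two upper covers, two lower covers, or linked through a diamond), so connectedness of the Hasse diagram genuinely does propagate a single choice and rigidity up to the global swap follows. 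For comparison, the paper proves the reconstruction by induction on $|\theta|$, removing a minimal box $t$ and checking that there is a unique way to re-attach $\phi(t)$ to $\theta'_{\phi(t)}$ compatibly with the cover relations so that the result is again a skew diagram; there the skew condition enters explicitly at the re-attachment step --- precisely the step your coloring argument omits.
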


We conjecture the converse:
\begin{conj}\label{conj:only if}
    If \(X_\theta\cong X'_{\theta'}\) as varieties, then \( P(\theta) \sim P({\theta'}) \).
\end{conj}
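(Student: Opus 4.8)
The plan is to adapt the strategy of \Theorem{main}: induct on \(|\theta|\) and extract enough invariants of the variety \(X_\theta\) to reconstruct \(P(\theta)\) up to semi-isomorphism. Two families of invariants are available. First, the Chow groups: an isomorphism \(X_\theta\cong X'_{\theta'}\) induces \(A_i(X_\theta)\cong A_i(X'_{\theta'})\) for all \(i\), and, by the Richardson analog of the Schubert fact used for \Theorem{main}, the classes of the sub-Richardson varieties of dimension \(i\) should form a basis of \(A_i(X_\theta)\). Under the dictionary between Richardson varieties and skew diagrams, sub-Richardson varieties of \(X_\theta\) correspond to sub-skew-diagrams \(\rho/\nu\subseteq\theta\), equivalently to pairs \((I,F)\) with \(I\) an order ideal and \(F\) an order filter of \(P(\theta)\) satisfying \(I\cap F=\varnothing\), the dimension being \(|\theta|-|I|-|F|\). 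Hence the number of such pairs of each total size is an isomorphism invariant; in particular \(\dim X_\theta=|\theta|=|\theta'|\), and the codimension-one count recovers \(\lvert\min P(\theta)\cup\max P(\theta)\rvert\), which is already invariant under poset isomorphism and opposite. Second, and more decisively, the singular locus: as in \Theorem{sing} one wants a combinatorial description of the irreducible components of \(\Sing(X_\theta)\) for a Richardson variety, and \(X_\theta\cong X'_{\theta'}\) forces \(\Sing(X_\theta)\cong\Sing(X'_{\theta'})\), matching up components.

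The first main step is to reduce to the connected case. The connected components of \(P(\theta)\) are exactly the edge-connected components of the skew diagram, and since a valid skew shape is contiguous along each row and column, distinct components occupy disjoint bands of rows and of columns, i.e.\ they sit in diagonal blocks. Consequently \(X_\theta\cong\prod_i X_{\theta_i}\), the product over the connected components \(\theta_i\); this is the geometric content already exploited in \Proposition{if}. To use it I must show that the decomposition into a product of Richardson varieties of connected skew diagrams is an invariant of \(X_\theta\): the indecomposable factors and their multiplicities should be recoverable from the isomorphism class, so that \(X_\theta\cong X'_{\theta'}\) yields a bijection between the components of \(P(\theta)\) and of \(P(\theta')\) matching isomorphic factors. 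This is a unique-factorization statement for (singular) projective varieties; one candidate tool is the induced ring isomorphism of Chow rings \(A^\bull(X_\theta)\cong A^\bull(X_{\theta_1})\otimes\cdots\otimes A^\bull(X_{\theta_c})\) together with a factorization argument for these graded rings.

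Granting the reduction, the second main step is the connected case: if \(\theta,\theta'\) are connected and \(X_\theta\cong X'_{\theta'}\), show \(P(\theta)\cong P(\theta')\) or \(P(\theta)\cong P(\theta')^{op}\). Here two symmetries preserving the isomorphism class play the role of the transpose in \Theorem{main}: transposing the skew diagram gives a poset isomorphism, while rotating it by \(180^\circ\) gives an order anti-isomorphism (the opposite poset) and an isomorphic Richardson variety via the involution \(w_0\), exactly as \(w_0(X^\lambda)=X_{\lambda^\vee}\) in the Schubert setting. The induction on \(|\theta|\) should then proceed as before: identify the irreducible components of \(\Sing(X_\theta)\) that intersect the others maximally properly, use a longest-chain invariant analogous to \(\xi\) to forbid matching these components in the wrong order, apply the induction hypothesis to each component and to the analog of \(X_{\lambda^0}\), and finally pin down the remaining data using \(|\theta|=|\theta'|\) and the codimension-one Chow count. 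The conclusion is to be assembled at the poset level, gluing the semi-isomorphisms supplied by the induction hypothesis into a semi-isomorphism of \(P(\theta)\) and \(P(\theta')\).

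The hardest step is expected to be this connected case together with its combinatorial input. For Schubert varieties the singular locus has the clean ``remove a hook'' form of \Theorem{sing}, whereas for Richardson varieties no such simple description is available in general; moreover the full collection of sub-skew-diagram counts need not determine \(P(\theta)\) on the nose, but only up to semi-isomorphism, which is precisely the weaker conclusion we seek. Thus the crux is twofold: establishing a usable combinatorial model for \(\Sing(X_\theta)\) (e.g.\ by applying the analysis underlying \Theorem{sing} to the two Schubert varieties \(X_{\mu^\vee}\) and \(X^\lambda\) cutting out \(X_\theta\)), and proving that the resulting invariants are rich enough to reconstruct \(P(\theta)\) up to isomorphism and opposite. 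Securing the invariance of the product decomposition in the first step is a secondary but genuine obstacle. It is precisely because these two points are not yet under control that the statement is posed as a conjecture rather than a theorem.
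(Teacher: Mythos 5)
You cannot be credited with a proof here, and in fact the paper contains none to compare against: the statement you were given is \Conjecture{only if}, which the paper explicitly leaves open. The paper proves only the opposite implication (\Proposition{if}), via \Corollary{cor12} and the involutions \(\theta\mapsto\theta^T\) and \(\theta\mapsto\theta^\dagger\); for the direction you address, the authors offer no argument at all. Your text, read honestly, is a research plan rather than a proof, and you say as much in your final paragraph.

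The concrete gaps are the ones you flag, and they are genuine, not cosmetic. First, the assertion that classes of sub-Richardson varieties ``should form a basis'' of \(A_i(X_\theta)\) is stated in the conditional and never established; the basis fact used in the proof of \Theorem{main} is the classical one for Schubert varieties, and its Richardson analogue (which would require, e.g., an affine-paving argument and an identification of the cells with pairs \((I,F)\) of an ideal and a filter) is nowhere proved or cited. Second, the entire inductive engine of \Theorem{main} runs on \Theorem{sing}, the Lakshmibai--Weyman description of \(\Sing(X_\lambda)\), which applies only to Schubert varieties; you correctly note that no comparably usable description of \(\Sing(X_\theta)\) for a Richardson variety is in hand, so the steps ``match the extremal singular components, apply a \(\xi\)-type invariant, induct'' have no object to act on. Third, the reduction to connected \(\theta\) needs more than \(X_\theta\cong\prod_i X_{\theta^i}\): you must know that the factorization of \(X_\theta\) into indecomposable projective varieties is unique up to permutation, so that an abstract isomorphism \(X_\theta\cong X'_{\theta'}\) matches factors; this unique-factorization claim is asserted as a ``candidate tool'' but not proved, and it is not a formality for singular projective varieties. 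Since these three steps are exactly where the difficulty of the conjecture lives, the proposal identifies the right obstacles but does not overcome any of them --- which is consistent with the paper's own decision to pose the statement as a conjecture.
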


While proving \Proposition{if}, we will show that if \(\lambda\) and \(\lambda'\) are diagrams, then \(P(\lambda)\cong P(\lambda')\) is equivalent to \[\text{either }\lambda=\lambda'\text{ or }\lambda^T=\lambda'\] (note that \(P(\lambda)^{op}\cong P(\lambda')\) is possible only when \(\lambda\) is a rectangle). Therefore, combining \Proposition{if} and \Conjecture{only if} would generalize \Theorem{main}.

We say that a skew diagram \(\theta\) is disconnected if it can be split into a non-empty NE part \(\theta'\) and a non-empty SW part \(\theta''\). More precisely, a NW-SE diagonal partitions the diagram into two disjoint non-empty skew diagrams. See \Figure{DisconnSkew}.
In the following, by diagonals we mean NW-SE diagonals. We call a diagonal ``higher" than another diagonal if it is strictly NE of the latter.
\begin{figure}[H]
    \[
        \begin{tikzpicture}[scale=0.4]
            \draw[-] (0, 1) -- (0,3) -- (1,3) -- (1,4) -- (3,4) -- (3,2) --(2,2) --(2,1) -- (0,1);
            \draw[-] (3,4) -- (3,6) -- (4,6) -- (4,7) -- (7,7) -- (7,6) --(6,6) --(6,4) -- (3,4);
            \draw[thick, -] (0,7) -- (6,1);
            \node[] at (5,5) {\(\theta'\)};
            \node[] at (1,2) {\(\theta''\)};
        \end{tikzpicture}
    \]
    \caption{A disconnected skew diagram.}
    \label{fig:DisconnSkew}
\end{figure}
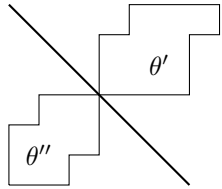

\begin{lemma}\label{lemma:conn}
    A skew diagram \(\theta\) is connected if and only if \(P(\theta)\) is connected.
\end{lemma}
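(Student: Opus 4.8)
The plan is to reduce the statement to a purely combinatorial fact about the contents of the boxes of \(\theta\). First I would observe that the connected components of the poset \(P(\theta)\) coincide with the connected components of the \emph{adjacency graph} of \(\theta\), whose vertices are the boxes and whose edges join two boxes sharing a horizontal or vertical edge. Indeed, the covering relations defining \(P(\theta)\) are exactly ``\(x\) immediately left of \(y\)'' and ``\(x\) immediately above \(y\)'', so the Hasse diagram of \(P(\theta)\) is precisely this adjacency graph; and two elements of a poset lie in the same connected component if and only if they are joined by a zigzag of covers, i.e.\ by a path in the Hasse diagram. Thus it suffices to prove that \(\theta\) is connected as a skew diagram if and only if its adjacency graph is connected.

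Next I would record the key numerical translation. Writing \(\theta=\mu/\lambda\), assign to each box in row \(i\), column \(j\) its \emph{content} \(j-i\); the NW--SE diagonals are exactly the level sets of the content, so a separating NW--SE diagonal is an integer \(c\) for which \(\theta\) has boxes of content \(>c\) and of content \(<c\) but none of content \(=c\). The easy direction is that such a \(c\) forces the adjacency graph to be disconnected: any two adjacent boxes have contents differing by exactly \(1\), so no box of content \(>c\) can be adjacent to a box of content \(<c\), and the two nonempty sets \(\{\text{content}>c\}\) and \(\{\text{content}<c\}\) are therefore unjoined. Hence a disconnected skew diagram has a disconnected adjacency graph, which gives one implication.

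For the converse I would use the skew-shape structure. Each row of \(\theta\) is a contiguous interval of boxes, and boxes in non-consecutive rows are never adjacent, so the adjacency graph is connected exactly when consecutive rows always overlap in a column; if it is disconnected there is an index \(i\) with \(\mu_{i+1}\le\lambda_i\) and with boxes present both in rows \(\le i\) and in rows \(\ge i+1\). I would then show that such a break forces an empty separating diagonal: since \(\lambda\) and \(\mu\) are weakly decreasing, every box in rows \(\le i\) has content \(\ge \mu_{i+1}-i+1\) while every box in rows \(\ge i+1\) has content \(\le \mu_{i+1}-i-1\), so the diagonal of content \(c=\mu_{i+1}-i\) is empty while both sides are nonempty. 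This is exactly a separating NW--SE diagonal, so \(\theta\) is disconnected as a skew diagram.

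I expect the main obstacle to be this last point: upgrading ``the adjacency graph is disconnected'' to ``there is a single straight NW--SE diagonal separating \(\theta\)''. The difficulty is that, a priori, two edge-disconnected components could have interleaving or merely adjacent content ranges, in which case no empty diagonal would exist; the monotonicity of \(\lambda\) and \(\mu\) is precisely what rules this out and forces a genuine content gap between the part above a break and the part below it. Once this content estimate is in hand, both implications follow immediately, and assembling them with the identification of \(P(\theta)\)-connectivity and adjacency-graph connectivity completes the proof.
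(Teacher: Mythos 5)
Your proof is correct, but your argument for the hard implication is genuinely different from the paper's. The easy implication is handled the same way in both: adjacent boxes lie on consecutive diagonals (their contents differ by exactly \(1\)), so no path of covers can cross a separating diagonal. For the other implication the paper argues directly from connectedness of \(\theta\): it shows there is a unique box \(t\) on the highest diagonal meeting \(\theta\), and that every other box has a neighbour on a strictly higher diagonal --- if not, the skew-shape property forces the diagonal through that box's NE corner to separate \(\theta\) --- so every box is joined to \(t\) by a content-increasing path in the Hasse diagram. You instead prove the contrapositive by partition arithmetic: disconnectedness of the adjacency graph of \(\theta=\mu/\lambda\) is equivalent to a row break \(\mu_{i+1}\le\lambda_i\) with boxes present both in rows \(\le i\) and in rows \(\ge i+1\), and the monotonicity of \(\lambda\) and \(\mu\) then produces the explicit empty diagonal \(c=\mu_{i+1}-i\), since rows \(\le i\) have contents \(\ge c+1\) and rows \(\ge i+1\) have contents \(\le c-1\); these inequalities check out. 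Your route is more elementary and exhibits the separating diagonal explicitly; the paper's route is more geometric and produces a canonical ``sink'' \(t\) together with monotone paths to it, at the cost of a uniqueness argument for \(t\). The one step you should spell out rather than assert is the opening translation of ``separating NW--SE diagonal'' into ``integer content value \(c\) absent from \(\theta\) with boxes on both sides'': it is a faithful reading of the paper's (informal, figure-based) definition, because a diagonal that partitions the boxes into two skew diagrams must avoid every box interior and so must sit at an integer content gap, but that identification is doing real work in your proof.
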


\begin{proof}
    Assume \(\theta\) is connected.

    We claim that there exists a unique \( t \in \theta \) that is ``the most NE", i.e. that belongs to the highest diagonal intersecting \(\theta\).
    Otherwise, pick two such boxes \(t_1, t_2\in \theta\), since they must belong to the same diagonal, they are comparable. Assume that \(t_1<t_2\), using the fact that \(\theta\) is a skew diagram, there exists a box \(t'\in \theta\) that is east of \(t_1\) and north of \(t_2\) (see \Figure{DiagSkew}). Then \(t'\) belongs to a higher diagonal, a contradiction.

    \begin{figure}[H]
        \[
            \begin{tikzpicture}[scale=0.4]
                \node[draw] at (2,5) {\(t_1\)};
                \node[draw] at (5,2) {\(t_2\)};
                \node[draw] at (5,5) {\(t'\)};
                \draw[dashed] (1,6) -- (0,7);
                \draw[dashed] (3,4) -- (4,3);
                \draw[dashed] (7,0) -- (6,1);
            \end{tikzpicture}
        \]
        \caption{Finding a box on a higher diagonal.}
        \label{fig:DiagSkew}
    \end{figure}
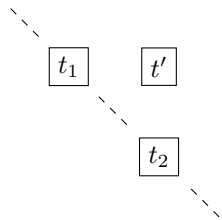

    If \( a \in \theta \setminus \{t\}\), then we claim that there exists an adjacent box on a higher diagonal. Assume the contrary, then since \(\theta\) is a skew diagram, there must be no box north of \(a\) (sharing the same column) and no box east of \(a\) (sharing the same row). Consequently, there must be no box strictly NW of \(a\) and no box strictly SE of \(a\) (without sharing the same row or column in both cases). It follows that the diagonal passing through \(a\)'s NE corner partitions \(\theta\) with \(a\) and \(t\) on different sides, contradicting the fact that \(\theta\) is connected.

    Repeatedly applying the above argument yields a path of adjacent boxes from \(a\) to \(t\) in \(P(\theta)\), hence \(P(\theta)\) is connected.

    Conversely, assume \(P(\theta)\) is connected. Let \(x\) and \(y\) be two different boxes in \(\theta\). Then there exists a sequence \((z_0 = x,\ z_1,\ \dots,\ z_l = y)\) of distinct boxes in \(\theta\) where consecutive boxes are adjacent. Since adjacent boxes in \(\theta\) cannot be separated by a diagonal, \(x,y\) are in the same connected component of \(\theta\). By our arbitrary choice of \(x,y\), we have that \( \theta\) is connected.
\end{proof}

\begin{defn}
    For \(\theta=\mu^\vee/\lambda\), let \(\theta^\dagger\coloneqq\lambda^\vee/\mu.\) See \Figure{Dagger}.
\end{defn}

\begin{figure}[H]
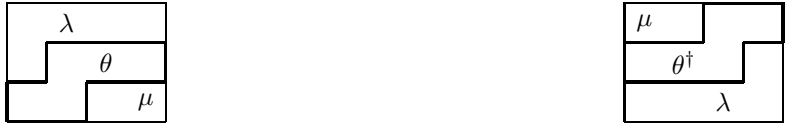

    \begin{subfigure}{0.49\textwidth}
        \[\tableau{15}{ [tl]& [tB]\lambda& [tB]& [tBr]\\ [lRB] & []  & [TB] \theta & [BR]\\[LH]& [BR]& [b] & [br] \mu }\]
    \end{subfigure}
    \begin{subfigure}{0.49\textwidth}
        \[\tableau{15}{ [tlB] \mu & [tBR] & [T]& [HR]\\ [LH] & [H] \theta^\dagger & [RB] & [Tr]\\[Tlb]& [Tb]& [Tb] \lambda & [br] }\]
    \end{subfigure}
    \caption{Skew diagrams \(\theta\) and \(\theta^\dagger\).}
    \label{fig:Dagger}
\end{figure}

Note that \(P(\theta^\dagger)=P(\theta)^{op}\).
Furthermore, considering the automorphism \(w_0\) of \(X\) we have that
\[
    X_{\theta^\dagger} = X_{\lambda^\vee} \cap X^\mu = w_0( X^\lambda) \cap w_0(X_{\mu^\vee}) = w_0(X^\lambda \cap X_{\mu^\vee}) = w_0(X_\theta).
\]
Thus, \(X_{\theta} \cong X_{\theta^\dagger}\).

Let \( \theta \) be a skew diagram and \( \theta^1, \dots, \theta^r \subseteq \theta \) be its connected components.
Then, \cite[Lemma 3.2(b)]{buch_pieri_2012} shows that
\[
    X_\theta \cong \prod X_{\theta^i}.
\] Therefore, \Proposition{if} and \Conjecture{only if} can be reduced to the case where \(\theta\) and \(\theta'\) are connected.

\begin{lemma}
    \label{lemma:strongskew}
    Let \( \theta,\ {\theta'} \) be connected skew diagrams and let \( \phi: P(\theta) \to P({\theta'}) \) be an isomorphism of posets, then either \[ \theta = {\theta'} \text{ and }\phi = \id \] or \[ \theta^T=\theta' \text{ and }\phi\text{ is given by transposing }\theta.\]
\end{lemma}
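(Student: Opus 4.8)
The plan is to realize $P(\theta)$ concretely as a subposet of $\Z^2$ and then recover the geometry of $\theta$, up to transposition, from purely order-theoretic data. First I would place each box of $\theta$ at its coordinates $(i,j)$ (row $i$, column $j$), identifying $P(\theta)$ with the induced subposet of $\Z^2$ under the coordinatewise order: the covering relations are exactly the unit right-steps $(i,j)\lessdot(i,j+1)$ and down-steps $(i,j)\lessdot(i+1,j)$, and since $\theta$ is a skew diagram one checks that whenever $(i,j)\le(i',j')$ with both boxes in $\theta$ the whole ``hook path'' through $(i,j')$ stays in $\theta$, so the order generated by the covers is the full coordinatewise order. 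In these coordinates the poset is graded by the rank $\rho=i+j$, and the remaining coordinate is the content $\delta=j-i$; transposition is precisely the map negating $\delta$ and fixing $\rho$. Any order isomorphism $\phi$ preserves rank up to an additive constant, so after normalizing I may assume $\rho\circ\phi=\rho$. The whole problem then reduces to showing $\delta\circ\phi=\epsilon\,\delta+\text{const}$ for a global sign $\epsilon=\pm1$, since $\epsilon=+1$ forces $\phi$ to be a translation (hence $\theta=\theta'$ and $\phi=\id$ in the relative-position sense of the paper) while $\epsilon=-1$ forces $\phi$ to be transposition (hence $\theta^T=\theta'$ and $\phi$ is transposing $\theta$).

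To control $\delta\circ\phi$ I would first recover the content-diagonals order-theoretically. The key local observation is that a rank-$2$ interval $[x,y]$ (that is, $x<y$ with $\rho(y)=\rho(x)+2$) is a ``diamond'' — it has two incomparable middle elements — if and only if $y=x+(1,1)$; the cases $y=x+(2,0)$ and $y=x+(0,2)$ produce chains instead, and when $y=x+(1,1)$ the defining inequalities of a skew diagram guarantee that both intermediate boxes $(i,j+1)$ and $(i+1,j)$ are present. Since diamonds are preserved by $\phi$, the relation ``differ by $(1,1)$'' is preserved. Because the occupied rows on any content-diagonal form a contiguous interval (a direct consequence of the definition of a skew diagram), each content-diagonal is a single chain under $(1,1)$-adjacency, and $\phi$ therefore carries the content-diagonals of $\theta$ bijectively to those of $\theta'$, inducing a bijection $\Phi$ between the two sets of occupied contents.

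The final step is to pin down $\Phi$, and here I would use connectivity twice. First, by \Lemma{conn} the occupied contents must form a contiguous interval of integers, since a skipped content would yield a separating NW-SE diagonal; the same holds on the $\theta'$ side, so $\Phi$ is a bijection between two integer intervals of equal length. Second, two content-diagonals $c,c'$ are joined by a covering relation exactly when $|c-c'|=1$ (every cover is a right- or down-step and so changes $\delta$ by $\pm1$), and connectivity guarantees that any two adjacent occupied diagonals are actually joined by such a cover. Thus ``adjacency of diagonals'' is an order-invariant, $\Phi$ is an isomorphism of the path graph on the contents, and such an isomorphism is either the identity or the reversal. This gives $\delta\circ\phi=\epsilon\,\delta+\text{const}$, and unwinding the $(\rho,\delta)\leftrightarrow(i,j)$ change of coordinates yields the two cases of the statement.

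I expect the main obstacle to be the orientation ambiguity: the poset does not distinguish horizontal covers from vertical covers, so every box admits a local ``reflection'' swapping its two covers, and one must show these local choices are forced to agree globally. The diamond characterization of content-diagonals, together with the two invocations of connectivity, is exactly what rigidifies this binary freedom into a single global sign $\epsilon$. The technical heart is verifying that in a skew diagram a $(1,1)$-comparable pair fills a complete $2\times2$ square, so that diamonds detect content-diagonals and nothing else; the remaining bookkeeping (contiguity of ranks on a diagonal, equality of interval lengths, parity of the normalizing constants) is routine.
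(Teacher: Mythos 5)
Your proposal is correct, but it takes a genuinely different route from the paper. The paper proves \Lemma{strongskew} by induction on \(|\theta|\): it takes the top box \(t\) of the leftmost column (after possibly replacing \(\theta\) by \((\theta^T)^\dagger\) so that the bottom box of that column has a right neighbor), shows \(\theta\setminus\{t\}\) is a connected skew diagram, applies the induction hypothesis to the restricted isomorphism, and then checks in two cases (\(t\neq b\) and \(t=b\)) that there is only one way to re-attach \(\phi(t)\) to \(\theta'_{\phi(t)}\) compatibly with the covering relations. Your argument is instead direct and coordinate-based: covers are unit steps, so once \(P(\theta)\) is connected an isomorphism shifts the rank \(\rho=i+j\) by a constant (note this is a third use of \Lemma{conn}, beyond the two you flag); the diamond characterization of rank-two intervals recovers the content diagonals (and your key technical claim is right: in a skew diagram a \((1,1)\)-comparable pair does fill a complete \(2\times 2\) square, by the monotonicity of the two bounding partitions, which also gives your hook-path and diagonal-contiguity claims); and connectivity forces the induced map on contents to be \(c\mapsto \epsilon c+\mathrm{const}\) with a global sign. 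What your route buys: no induction, no case analysis, and a stronger structural statement — every isomorphism \(P(\theta)\to P(\theta')\) is induced by an affine map of \(\Z^2\) (a translation or a transposed translation), which makes the rigidity of \(\phi\) itself transparent rather than emerging box-by-box. What the paper's route buys: it is more elementary and self-contained, needing only \Lemma{conn} and the definition of a skew diagram, with no auxiliary invariants (\(\rho\), \(\delta\), diamonds) to set up and verify. Both proofs lean on \Lemma{conn} at the crucial moments.
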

\begin{proof}
    Note that \( | \theta | = | P(\theta) | = | P({\theta'}) | = |{\theta'}| \).

    We will prove the statement by induction on \( | \theta | = |{\theta'}| \). If \( |\theta| = |{\theta'}| \leq 2 \), then the claim is trivially true, thus we assume that \( |\theta| > 2 \).

    Assume by induction that the lemma is true for all connected skew diagrams of size less than \( |\theta| \).

    Let \(t\) and \(b\) be the top and bottom boxes in the left-most non-empty column in \(\theta \), respectively. Suppose there is no box to the right of \(b\) in \(\theta\). Since \(\theta\) is connected and \(|\theta|>2\) by our assumption, there is a neighboring box \(y\) above \(b\). See \Figure{noboxrightofb}.

    Note that
    \[
        (\theta^\dagger)^T=(\lambda^\vee)^T/\mu^T=(\lambda^T)^\vee/\mu^T=(\theta^T)^\dagger\text{ and }\theta^{\dagger\dagger}=\theta.
    \]
    Therefore, we may replace \(\theta\) with \((\theta^T)^\dagger\), \(\theta'\) with \((\theta'^T)^\dagger\), and \(\phi\) with the corresponding isomorphism \[P((\theta^T)^\dagger)\cong P(\theta)^{op}\to P((\theta'^T)^\dagger)\cong P(\theta')^{op}\] if necessary and assume that there is a neighboring box \(y\) to the right of \(b\) in \(\theta\). See \Figure{dagger} and \Figure{lemma6case1}.
    \begin{figure}[H]
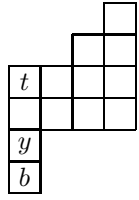
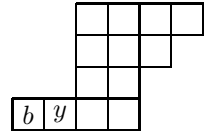

        \begin{subfigure}{0.49\textwidth}
            \centering
            \(\tableau{12}{ & & & [a]\\ & & [a]& [a]\\[a]t& [a]& [a]& [a]\\[a]& [a]& [a]& [a]\\[a]y\\[a]b}\)
            \caption{A skew diagram \(\theta\) where no box is to the right of \(b\).}
            \label{fig:noboxrightofb}
        \end{subfigure}
        \begin{subfigure}{0.49\textwidth}
            \centering
            \(\tableau{12}{ & & [a] & [a] & [a] & [a]\\ & & [a] & [a] & [a]\\ & & [a] & [a]\\[a]b & [a]y & [a] & [a]}\)
            \caption{The skew diagram \((\theta^T)^\dagger\).}
            \label{fig:dagger}
        \end{subfigure}
        \caption{Example of reducing to the case where there is a box to the right of $b$.}
    \end{figure}
    \begin{figure}[h]
        \begin{subfigure}{0.49\textwidth}
            \centering
            \(\tableau{12}{& & & [a]\\ & & [a]& [a]\\[a]t& [a]z& [a]& [a]\\[a]w & [a]& [a]& [a]\\[a]& [a]\\[a]b& [a]y}\)
        \end{subfigure}
        \begin{subfigure}{0.49\textwidth}
            \centering
            \(\tableau{12}{ & & [a] & [a] & [a] & [a]\\ & & [a] & [a] & [a]\\ & & [a] & [a]\\[a]t & [a]y & [a] & [a]}\)
        \end{subfigure}
        \caption{Example of skew diagrams where there is a box to the right of \(b\).}
        \label{fig:lemma6case1}
    \end{figure}

    Note that \(t\) is minimal in \(P(\theta)\), so that \(\theta_t:=\theta \setminus \{t\}\) is also a skew diagram and \(P(\theta_t) = P(\theta) \setminus \{t\}\).
    Since \(\theta\) is a skew diagram, there must then be a neighboring box \(z\) to the right of \(t\) and a column of boxes connecting \(y\) and \(z\) in \(\theta\). This implies that \(\theta_t\) is connected. In this case, \( \phi(t) \) is minimal in \(P(\theta')\), so \( {\theta'}_{\phi(t)} \) is also a skew diagram. Note that the restriction \( \phi_t: P(\theta_t) \to P({\theta'}_{\phi(t)}) \) is an isomorphism of posets. Using the fact that \( P(\theta_t) \cong P({\theta'}_{\phi(t)}) \) is connected, by \Lemma{conn} we have that \({\theta'}_{\phi(t)}\) is also connected.

    By induction hypothesis, up to transposing \(\theta'\), we may assume without loss of generality that \[\theta_t=\theta'_{\phi(t)}\text{ and }\phi_t=\id.\] It suffices to show that \(\phi(t)\) is the left neighbor of \(\phi(z)\) in \(\theta'\).

    If \(t \neq b\), then there is a neighboring box \(w\) below \(t\). \(\phi: P(\theta)\to P(\theta')\) being an isomorphism implies \(\phi(t)<\phi(z)\) and \(\phi(t)<\phi(w)\). There is a unique way to attach \(\phi(t)\) to \(\theta'_{\phi(t)}\) preserving these relations such that the result is a skew diagram.

    Otherwise, \(t=b\), and \(y=z\) is the only neighbor of \(t\) in \(\theta\). \(\phi: P(\theta)\to P(\theta')\) being an isomorphism implies that \(\phi(y)\) is the only neighbor of \(\phi(t)\) in \(\theta'\) and that \(\phi(t)<\phi(y)\). Suppose for a contradiction that \(\phi(t)\) is the top neighbor of \(\phi(y)\) in \(\theta'\). Then \(\phi_t=\id\) implies that there is no box above \(y\) in \(\theta_t\). Since \(\theta_t\) is a connected skew diagram and \(|\theta_t|\geq2\) by our assumption, there must be a neighboring box in \(\theta_t\) to the right of \(y\), and therefore a neighboring box to the right of \(\phi(y)\) in \(\theta'_{\phi(y)}\). Since \(\theta'\) is a skew diagram, there must then be a neighboring box to right of \(\phi(t)\) in \(\theta'\). See \Figure{lemma6impossible}. But then \(\phi(t)\) has two neighbors, which is a contradiction.
    \begin{figure}[h]
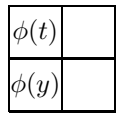

        \centering
        \(
        \tableau{20}{[a]\phi(t)& [a]\\ [a]\phi(y) & [a]}
        \)
        \caption{Impossible subset of \(\theta'\).}
        \label{fig:lemma6impossible}
    \end{figure}
\end{proof}

Since \(P(\theta^T) \cong P(\theta) \), we have the following:

\begin{cor}
    \label{cor:cor12}
    Let \( \theta,\ {\theta'} \) be connected skew diagrams, then \( P(\theta) \cong P({\theta'}) \) if and only if either \( \theta = {\theta'} \) or \( \theta^T=\theta' \).
\end{cor}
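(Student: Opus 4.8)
The plan is to deduce this essentially for free from \Lemma{strongskew}, which already contains all the substantive content; the corollary is just the statement obtained by forgetting the explicit description of the isomorphism $\phi$. I would split into the two implications.

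For the forward direction, I would assume $P(\theta) \cong P(\theta')$ and fix an arbitrary poset isomorphism $\phi \colon P(\theta) \to P(\theta')$. Since $\theta$ and $\theta'$ are connected, \Lemma{strongskew} applies directly and tells us that either $\theta = \theta'$ (with $\phi = \id$) or $\theta^T = \theta'$ (with $\phi$ the transposition map). Dropping the information about $\phi$ gives exactly the dichotomy ``$\theta = \theta'$ or $\theta^T = \theta'$'' that we want.

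For the reverse direction, the only ingredient is the observation recorded immediately before the statement, namely $P(\theta^T) \cong P(\theta)$. I would justify this by noting that transposing a diagram interchanges the two generating covering relations ``immediately to the left of'' and ``immediately above'', so the transposition bijection on boxes carries the partial order on $P(\theta)$ to the partial order on $P(\theta^T)$ and is therefore a poset isomorphism. Granting this, if $\theta = \theta'$ then $P(\theta) = P(\theta')$ on the nose, while if $\theta^T = \theta'$ then $P(\theta) \cong P(\theta^T) = P(\theta')$, as required.

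I do not expect any genuine obstacle here, since the difficulty has all been absorbed into \Lemma{strongskew} (and, through the use of connectedness, into \Lemma{conn}). The single point worth stating carefully is the symmetry of the covering relations of $P(\theta)$ under swapping rows and columns, which is what makes transposition a poset isomorphism; everything else is a direct invocation of the lemma.
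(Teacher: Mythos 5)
Your proof is correct and matches the paper's own argument: the paper derives the corollary from \Lemma{strongskew} with precisely the one-line observation that \(P(\theta^T)\cong P(\theta)\), which is the same two-step reasoning (invoke the lemma for the forward direction, use transposition symmetry of the covering relations for the reverse) that you spell out. No gaps; your version is just a more explicit write-up of what the paper leaves implicit.
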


Finally, using the above corollary we obtain the following proof:

\begin{proof}[Proof of \Proposition{if}]
    Without loss of generality, assume \(\theta\) and \(\theta'\) are connected.

    Assume that \(P(\theta)\cong P(\theta')\), then by \Corollary{cor12} either \( \theta = {\theta'} \) or \( \theta^T=\theta' \). By \Lemma{transpose}, we may assume \( \theta = {\theta'} \). Using embeddings as in the proof of \Theorem{main} we can reduce to the case \(X = X'\) from which the conclusion follows.

    Otherwise, we have that \(P(\theta)^{op}\cong P(\theta')\). Consider the automorphism \(w_0\) of \(X\), it restricts to an isomorphism \(X_\theta \cong X_{\theta^\dagger}\). Since \(P(\theta^\dagger) \cong P(\theta)^{op} \cong P(\theta') \), by the previous case \(X_{\theta^\dagger} \cong X'_{\theta'}\) and the conclusion follows.
\end{proof}

\bibliography{bib.bib}
\end{document}